\numberwithin{equation}{section}
\newtheorem{lemma}{Lemma}[section]
\newtheorem{theorem}{Theorem}[section]
\newtheorem{corollary}{Corollary}[section]
\newtheorem{assumption}{Assumption A}
\newcommand{\be}{\begin{equation}}
\newcommand{\ee}{\end{equation}}
\newcommand{\ba}{\begin{array}}
\newcommand{\ea}{\end{array}}
\newcommand{\bee}{\begin{eqnarray*}}
\newcommand{\eee}{\end{eqnarray*}}
\newcommand{\bea}{\begin{eqnarray}}
\newcommand{\eea}{\end{eqnarray}}
\begin{document}
	
\title{An inexact restoration-nonsmooth algorithm with variable accuracy
for stochastic nonsmooth convex optimization problems in machine learning and stochastic linear
complementarity problems}
\author{Nata\v sa Kreji\' c\footnote{Department of Mathematics and Informatics, Faculty of Sciences, University of Novi
Sad, Trg Dositeja Obradovi\' ca 4, 21000 Novi Sad, Serbia. e-mail: \texttt{natasak@uns.ac.rs}}, 
Nata\v sa Krklec Jerinki\' c\footnote{Department of Mathematics and Informatics, Faculty of Sciences, University of Novi Sad, Trg Dositeja Obradovi\' ca 4, 21000 Novi Sad, Serbia. e-mail: \texttt{natasa.krklec@dmi.uns.ac.rs}}, 
Tijana Ostoji\' c  \footnote{Department of Fundamental Sciences, Faculty of Technical Sciences, University of Novi Sad, Trg Dositeja Obradovi\' ca 6, 21000 Novi Sad, Serbia. e-mail: \texttt{tijana.ostojic@uns.ac.rs}} \footnote{Corresponding author} }

\date{November 2, 2022}
\maketitle
\begin{abstract}
We  study unconstrained optimization problems with nonsmooth and convex objective function in  the form of a mathematical expectation. The proposed method approximates the expected objective function with a sample average function  using Inexact Restoration-based adapted sample sizes. The sample size is chosen in an adaptive manner based on  Inexact Restoration.  The algorithm uses line search and assumes descent directions  with respect to the current approximate function.   We prove the  a.s.  convergence under standard assumptions. 
%The convergence rate is also considered and the worst-case complexity of $\mathcal{O} (\varepsilon^{-2})$ is proved. 
Numerical results for two types of problems, machine learning loss function for training classifiers and stochastic linear complementarity problems, prove the efficiency of the proposed scheme.  
\end{abstract}

\textbf{Key words:} 
Nonsmooth optimization, Subgradient, Inexact Restoration, Sample Average Approximation, Variable sample size.

\section{Introduction}
Let us observe an unconstrained optimization problem which objective function takes the form of a mathematical expectation
\begin{equation} 
\min_{x} f(x)=E\left[F(x,\xi)\right],
\label{prob1}
\end{equation}
where $F:\mathbb{R}^n \times \mathbb{R}^m \rightarrow \mathbb{R}$ is continuous and convex function with respect to $x$,   
bounded from below,  $\xi:\Omega\rightarrow \mathbb{R}^m$ is random vector and $(\Omega, \mathcal{F}, P)$  is probability space.  Convexity implies that $ F $ is locally Lipschitz, \cite{BKM14}. No additional smoothness assumption is imposed. A number of important problems can be stated in the form \eqref{prob1} - starting from data analytics with huge data sets which require working with subsamples or online training with the permanently increasing data sets \cite{cevher}, to simulations of natural and industrial processes with number of random parameters \cite{LSOS11, LSOS26, LSOS27, LSOS38}. 

The objective function in \eqref{prob1}  can rarely be computed exactly and  might be nonsmooth. Thus, the main issues that arise in iterative methods for solving \eqref{prob1} are the approximation of the objective function and the choice of search directions. The most common approximation of the mathematical expectation is the Sample Average Approximation (SAA). For a given independent and identically distributed, i.i.d., sample $ \{\xi_1,\ldots,\xi_N\} $ of the size $ N, $ the SAA approximate objective function is defined as 
\begin{equation} 
f_{N}(x)=\frac{1}{N}\sum_{i=1}^{N}f_i(x),
\label{SAA}
\end{equation}
where $ f_i(x)=F(x,\xi_i)$.  The sample vectors  $\xi_1, \ldots, \xi_N$ are assumed to be i.i.d. and the sample size $ N $ determines the accuracy of the approximation \eqref{SAA}, \cite{Shapiro}.  Naturally, larger $ N $ implies higher  accuracy of the approximate function $ f_N,$  but makes any optimization algorithm more costly as the cost of computing $ f_N, $ as well as search directions, increases with $ N. $ There is a vast literature dealing with variable sample size methods for SAA approximations, \cite{BCT, SBNKNKJ, HT2, NKNKJ_varsam, NKJAR}, which range from simple heuristics to complex schemes, all of them with the idea of using cheaper,  lower  accuracy approximations of the objective function whenever possible, in order to save the computational effort. 

The second issue one needs to address is the choice of search directions. In the case of smooth problems we can choose between relatively slow but cheap first order methods or more elaborate and more costly second order methods, depending on a particular problem structure, needed accuracy etc. In the case of nonsmooth problems the gradient is generally replaced by a subgradient or more elaborate schemes like gradient sampling, \cite{grad_sample, kiwiel}, bundle methods, \cite{LV98}, proximal methods, \cite{LS97}, and so on. A number of recent papers deals with second order search directions \cite{asl, asl2, nedic}. 

The method presented in this paper addresses both issues by using an adaptive variable accuracy and descent directions with respect to the current approximate functions. The sample size  is governed by   Inexact Restoration (IR) framework introduced by Martinez and Pilota \cite{JMMP} and consists of two phases: the restoration and the optimality phase. The main idea of IR is to treat  the phases, restoration and optimality, in a modular way and then to use a  merit function, which combines feasibility and optimality  and enforces progress towards a feasible optimal point.  As IR is  constrained optimization tool, the problem \eqref{prob1}
is reformulated  into  a constrained problem as follows 
\begin{equation}
\min f_N(x), \text{ s.t. } f_N(x)=f(x),
\label{prob_constrained}
\end{equation}
where $f$ and $ f_N$ are defined in \eqref{prob1} and \eqref{SAA}, respectively. 

 Notice that \eqref{prob_constrained} is equivalent to \eqref{prob1} if the constraint is satisfied. However if we consider methods that are not strictly feasible, i.e., not all iterations satisfy the constraint, then we can treat $ N $ as an additional variable in the constraint. That is precisely what we will do in the IR approach - in each iteration $ k $ of the method we will determine  a suitable $ N_k. $
%In the IR approach the value of $ N $ can be treated as a iteration-dependent parameter and in each iteration a suitable $ N_k $ will be determined. 
There are numerous studies  that have confirmed the benefits of using the IR approach  in the varying accuracy approximations framework, \cite{IRTRUST, NKJMM}. The key advantage of this approach is the fact that feasibility and optimality are kept in balance through merit function. Therefore, the accuracy of the approximate objective function depends on the progress towards optimality in each iteration. Obviously, the accuracy is adaptive, endogenous to the algorithm and there is no need  for  additional parameters or heuristics in the sample size determination.  Furthermore, the sequence of sample sizes is very often nonmonotone, increasing the accuracy (and the computational cost) whenever we approach the solution to ensure good quality of the approximate solution, and decreasing the  accuracy (and the costs) when the current iterate is far away from the solution.  The approach has been used for variable accuracy approximations for the first time in \cite{NKJMM} for the problem of finite sum minimization coupled with  line search descent direction method, based on results from \cite{ana}.
It is extended to trust region framework and constrained problems, \cite{JMMP, IRTRUST, BMK1}. An approach for solving problems with variable accuracy in both objective function and constraints is analyzed in \cite{IRnov2}.

The step size is a challenging issue in stochastic analysis and it was a subject of research in many papers,  \cite{duchi, george, hennig, kingma, review, pesky}. Line search methods, which are an important tool in deterministic optimization, are not easily extended to the stochastic case due to the mutual dependence of  step size  and search direction, which are both random variables in the stochastic framework.   An important study on this topic is given in \cite{review} where the approximations of the objective function and its gradient are assumed to be good enough with a fixed  high probability. Under these settings, the complexity analysis in terms of expected number of iterations to reach near-optimal solution is provided. 
In \cite{LSOS} a second order direction is considered but  an additional sampling is used in Armijo-like condition to overcome the bias issue. The approach presented here differs in several aspects. First of all, we consider the approximate objective of the form (\ref{SAA}) and prove that the algorithm  introduced here yields $ N \to \infty. $ In other words we approach the objective function almost surely under some standard conditions. This property of the algorithm is a direct consequence of IR strategy. Furthermore, the conditional expectation of  the relevant SAA estimator is equal to the objective function under our settings (for details see the final paragraph of Section 2 and   the proof of Lemma \ref{nova}),  and the step size is not directly involved. Another important difference lies in the fact that the objective function and its approximations are not differentiable, and thus the step size analysis is more complicated even in the strongly convex case.

Our contributions are the following. We define Inexact Restoration - Nonsmooth (IR-NS) algorithm for  nonsmooth optimization with variable accuracy and prove  a.s. convergence of the algorithm under the set of standard assumptions.  By using Inexact Restoration for sample size selection we generalize the results from \cite{kineski}. More precisely, since IR-NS pushes the SAA error to zero, in the case of finite sum problems where the objective function is given by \eqref{SAA} with the finite full sample size $N$, the true objective function is reached eventually and the convergence results from  \cite{kineski} hold. IR-NS algorithm  also covers wider class of problems  than finite sums, including infinite sums. 
%in the sense that IR-NS algorithm covers the method from \cite{kineski} if the value of $ N $ is fixed, i.e., if the objective function is given by (\ref{SAA}) as for the case of finite $ N $ we reach that value eventually and thus the results from \cite{kineski} hold.  
Our experiments confirm the intuitive reasoning that working with variable, adaptive sample size is more effective than working with predefined or full sample size as in \cite{kineski}. 
To emphasize this fact we present experiments with the same search direction as in \cite{kineski} - the  nonsmooth BFGS descent direction,  and demonstrate the advantages of variable sample size approach proposed in IR-NS. In general,  an  arbitrary descent direction in the sense of Assumption \ref{descent} stated  below  is applicable. 
%In general, the method introduces the variable accuracy approach with possibly large computational savings. 
From theoretical point of view, the complexity  of  order  $\varepsilon^{-2}$ is proved, which also applies to the method from \cite{kineski}. The obtained complexity is in line with the results from \cite{IRnov1} where the complexity of IR is analyzed. The result in \cite{IRnov1} is obtained for smooth constrained problems and is of the form $ \varepsilon_{feas}^{-1} + \varepsilon_{opt}^{-2},$ with $ \varepsilon_{feas} $ being the constant for feasibility and $ \varepsilon_{opt}$ coincides with the $ \varepsilon $  that we consider here. Notice that the problems considered in \cite{IRnov1} are smooth and deterministic. The complexity results obtained in \cite{BMK1} are not comparable to the complexity results for IR-NS  as the methods analyzed in \cite{BMK1} are specialized for smooth problems and problems with regularization.  %The numerical study we present indicates that  the overall cost of the iterative process IR-NS   with the adaptive variable sample size is significantly smaller compared to fixed or predetermined sample size scheme.
It is important to notice that the choice of sample size we propose here introduces stochastic iterative sequence which might seem as an  unnecessary complication if one is dealing with finite sum problems. However we will show that the complexity remains the same and asymptotically we get a.s. convergence, so the stochastic nature does not alter the expected theoretical results. On the other hand,  the intrinsic nature of the sample size variation, based on the progress of the iterative process, yields significant computational cost savings as demonstrated in the numerical results. 
%As expected, it is also shown that the variable sample size approach is more cost effective than working with the full sample as in \cite{kineski}. 

The paper is organized as follows. The algorithm and some preliminaries are given in Section 2, while Section 3 contains convergence analysis. Numerical results are presented in Section 4. Some conclusions are drawn in Section 5.

\section{Proposed algorithm}
The following assumption summarizes the  properties of the problem \eqref{prob1}. 
\begin{assumption}
Assume that $f_i(x)=F(x,\xi_i),$ $i=1,2,\ldots,$ are continuous, convex and bounded from below with a constant $C$ for all $\xi_i.$
\label{pp_Lip}
\end{assumption}
Notice that Assumption A\ref{pp_Lip} implies that $ f $ is convex and continuous function as well as $ f_N. $ Following the standard line search method, we assume that a descent direction can be provided for any given function $f_N$. 
\begin{assumption} \label{descent} For any given $N$, $x$ and $B$ such that $m I\preceq B(x) \preceq M I,$ for some  positive and bounded constants $m\leq M$ we can compute a direction $ p_N \in \mathbb{R}^n $ such that  
$$p_{N}(x)=-B(x) \bar{g}_N(x) \quad \mbox{and}  \sup_{g \in \partial f_N(x)} g^T p_{N}(x) \leq -\frac{m}{2}\|\bar{g}_N(x)\|^2,$$
where $\bar{g}_N(x)  \in \partial f_N(x).$
\label{pkop}
\end{assumption}
Let us briefly discuss the plausibility of the above assumption. One possibility to generate such direction is presented in \cite{kineski} where $ B $ is the BFGS matrix. If an oracle for calculating $  \sup_{g \in \partial f_N(x)} g^T p_{N}(x)$ is available, then we can take the  subgradient descent direction.  Another approach would be to use gradient subsampling techniques \cite{burke}. For directions that satisfy Assumption \ref{descent} the following result holds, \cite{kineski}. We provide the proof for the sake of completeness.

\begin{lemma} 
Let Assumptions A\ref{pp_Lip}  and A\ref{pkop} hold. Then  there exists  $\tau_N(x)>0$ and $ \gamma \in (0,1) $ such that   the subgradient Armijo condition  
\begin{equation*}
 f_N(x+\alpha p_{N}(x)) \leq f_N(x)- \gamma  \alpha \|p_{N}(x)\|^2.
\label{Wolfe1}
\end{equation*}
 holds for all $\alpha \in [0,\tau_N(x)]. $ 
\label{teorema_kineski_armijo}
\end{lemma}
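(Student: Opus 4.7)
The plan is to deduce the Armijo inequality from a directional-derivative estimate at $\alpha=0^+$ and then extend it to a full interval $[0,\tau_N(x)]$ using the convexity of $f_N$. Since each $f_i$ is convex and continuous by Assumption A\ref{pp_Lip}, the approximation $f_N$ is convex and continuous, so its one-sided directional derivative
\begin{equation*}
f_N'(x;d):=\lim_{\alpha\downarrow 0}\frac{f_N(x+\alpha d)-f_N(x)}{\alpha}
\end{equation*}
exists and admits the standard max-representation $f_N'(x;d)=\sup_{g\in\partial f_N(x)} g^T d$. Substituting $d=p_N(x)$ and invoking Assumption A\ref{pkop} immediately gives $f_N'(x;p_N(x))\leq -\frac{m}{2}\|\bar{g}_N(x)\|^2$.

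Next I would bound $\|p_N(x)\|$ in terms of $\|\bar{g}_N(x)\|$ via the spectral upper bound $B(x)\preceq MI$, obtaining $\|p_N(x)\|\leq M\|\bar{g}_N(x)\|$. Picking the universal constant $\gamma:=m/(4M^2)\in(0,1)$ then yields the strict inequality
\begin{equation*}
f_N'(x;p_N(x))\leq -\frac{m}{2}\|\bar{g}_N(x)\|^2\leq -\frac{m}{2M^2}\|p_N(x)\|^2 < -\gamma\|p_N(x)\|^2
\end{equation*}
whenever $\bar{g}_N(x)\neq 0$; if $\bar{g}_N(x)=0$, then $p_N(x)=0$ and the Armijo inequality reduces to $f_N(x)\leq f_N(x)$, which holds trivially with any $\tau_N(x)>0$.

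To extend this strict inequality on directional derivatives to an interval of positive step sizes, I would invoke the convex-analysis fact that the difference quotient $\alpha\mapsto [f_N(x+\alpha p_N(x))-f_N(x)]/\alpha$ is nondecreasing on $(0,\infty)$ and converges to $f_N'(x;p_N(x))$ from above as $\alpha\downarrow 0$. Combined with the strict gap derived above, right-continuity of monotone functions produces a threshold $\tau_N(x)>0$ such that the quotient stays below $-\gamma\|p_N(x)\|^2$ throughout $(0,\tau_N(x)]$; multiplying through by $\alpha$ delivers the claimed subgradient Armijo inequality.

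The step I expect to be the main obstacle, and the one I would either cite or prove as a preliminary observation, is the max-representation $f_N'(x;d)=\sup_{g\in\partial f_N(x)} g^T d$ together with the monotonicity of the difference quotient. These two ingredients are the nonsmooth replacements for the first-order Taylor expansion and are precisely what allow Assumption A\ref{pkop}, phrased in terms of subgradients, to translate into an actual decrease along the ray $\{x+\alpha p_N(x):\alpha>0\}$ without requiring differentiability of $f_N$.
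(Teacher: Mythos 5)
Your proposal is correct, and its core is the same as the paper's: use the max-representation of the directional derivative, $f_N'(x;p_N(x))=\sup_{g\in\partial f_N(x)}g^Tp_N(x)$, apply Assumption A\ref{pkop} together with $\|p_N(x)\|\leq M\|\bar g_N(x)\|$ to get a negative slope of order $-\|p_N(x)\|^2$, and then exploit convexity to turn this pointwise slope bound into an Armijo inequality on a whole interval. Where you differ is in the interval-extension device: the paper compares $\delta(\alpha)=f_N(x+\alpha p_N(x))$ with the relaxed linear function $l(\alpha)=f_N(x)+\alpha\eta\sup_{g\in\partial f_N(x)}g^Tp_N(x)$, $\eta\in(0,1)$, and defines $\tau_N(x)$ as their unique intersection on $(0,\infty)$ — an argument that uses the boundedness from below of $f_N$ (to guarantee the intersection exists) and in return produces, in effect, a maximal admissible stepsize. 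You instead use monotonicity of the convex difference quotient $\alpha\mapsto[f_N(x+\alpha p_N(x))-f_N(x)]/\alpha$ and its convergence to $f_N'(x;p_N(x))$, combined with a strict slack in the constant (taking $\gamma$ strictly below $m/(2M^2)$); this avoids any appeal to boundedness from below and is arguably cleaner, at the cost of producing a possibly smaller, non-explicit $\tau_N(x)$ — which is all the lemma asks for. One small repair: your claim that $\gamma:=m/(4M^2)$ is automatically in $(0,1)$ fails when $M$ is small (e.g.\ $m=M=0.1$ gives $\gamma=2.5$); since the Armijo condition only gets weaker as $\gamma$ decreases, simply take $\gamma=\min\{m/(4M^2),1/2\}$ (the paper's constant $\eta m/(2M^2)$ needs the analogous adjustment via a small enough $\eta$), after which the argument goes through as written.
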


\begin{proof}  Let us fix an arbitrary $N$ and  an arbitrary $x \in \mathbb{R}^n$. If  $\bar{g}_N(x)=0$ the statement is obviously true. In the case    $\bar{g}_N(x)\neq 0 $ we can  define $\delta(\alpha):=f_N(x+\alpha p_{N}(x))$, where   $p_{N}(x)$ is a descent direction satisfying Assumption \ref{pkop}. For such $  p_{N}(x)$  there holds $$\delta'(0)=\sup_{g \in \partial f_N(x)} g^T p_{N}(x)<0.$$ 
Consider   $$l(\alpha):=f_N(x)+\alpha \eta \sup_{g \in \partial f_N(x)} g^T p_{N}(x),$$
for some $\eta \in (0,1)$. Given that $ \sup_{g \in \partial f_N(x)} g^T p_{N}(x) < 0, $ $f_N$ is bounded from below and convex by Assumption A\ref{pp_Lip}, there exists an unique intersection of the functions $\delta$ and $l$ on the interval $\alpha \in (0,\infty)$.  Let us denote this intersection by $\tau_N(x)$. Then, for all $\alpha \in [0,\tau_N(x)]$ there holds 
$$f_N(x+\alpha p_{N}(x))\leq  f_N(x)+\alpha \eta \sup_{g \in \partial f_N(x)} g^T p_{N}(x).$$ Furthermore, Assumption \ref{pkop} implies 
$$f_N(x+\alpha p_{N}(x))\leq f_N(x)-\alpha \eta \frac{m}{2}\|\bar{g}_N(x)\|^2\leq f_N(x)-\alpha \eta \frac{m}{2 M^2}\|p_{N}(x)\|^2$$  and the statement holds for 
$\gamma=\eta m/(2 M^2)$. 
\end{proof}

The problem we are solving is defined by (\ref{prob_constrained}). Clearly the feasibility condition 
$ f_N(x) = f(x) $ can not be enforced in the general case of expected value as in that case we should have $ N \to \infty. $ Furthermore, neither the deviation from feasible condition $ |f(x)-f_N(x)| $ can be computed. Thus we introduce an approximate infeasibility measure as a function $ h(N) $ for arbitrary integer $ N. $ Assume that  $h: \mathbb{N} \to \mathbb{R_+}\cup \{0\}$  is monotonically decreasing function such that $\lim_{N\rightarrow\infty}h(N)= 0.$ In other words, $h(N) $ is a proxy for $ |f(x)-f_N(x)|$.     If  we are solving a finite sum problem, i.e. if $ f(x) = f_{N_{\max}}(x)  $ for a fixed $ N_{\max} $ then for arbitrary $ N \leq N_{\max} $ we can define $h(N)=(N_{max}-N)/N_{max}.$ For the case of unbounded $ N $ one possible simple choice is  $h(N)=N^{-1}. $ 
The merit function  for IR is defined in the usual way 
$$
\Phi(x,N, \theta):= \theta f_{N}(x)+(1-\theta)h(N),
$$
where $\theta \in (0,1)$ is the penalty parameter used to give different weights to the objective function and the measure of infeasibility and $ N $ is an integer that defines the level of accuracy in the approximate function $ f_N. $ 

At each iteration $ k $ we have the accuracy parameter as an integer $ N_k, $ the  solution estimate $ x_k $, the penalty parameter $\theta_k$  and the approximate objective function $ f_{N_k}. $  The algorithm  is as follows. 

\noindent {\bf Algorithm 1}: \texttt{IR-NS} (\texttt{I}nexact \texttt{R}estoration - \texttt{N}on\texttt{s}mooth)
\label{IRBFGS}
\begin{itemize}
\item[\textbf{S0}] Given $ x_0 \in \mathbb{R}^n, N_0 \in \mathbb{N}, \theta_0, r \in (0,1), \beta,  \gamma, \overline{\gamma} > 0$. Set $ k =0$.

\item[\textbf{S1}] Restoration phase. Find $\tilde{N}_{k+1}\geq N_k$ such that
\begin{equation*}
h(\tilde{N}_{k+1})\leq r h(N_k),
\label{Ruslov1NK}
\end{equation*}
\vspace{-0.5cm}
\begin{equation}
f_{\tilde{N}_{k+1}}(x_k)-f_{N_k}(x_k)\leq \beta h(N_k).
\label{Ruslov2NK}
\end{equation}

\item[\textbf{S2}] If
$
\Phi(x_k,\tilde{N}_{k+1},\theta_{k})-\Phi(x_k,N_k,\theta_{k})\leq \frac{1-r}{2}\left( h(\tilde{N}_{k+1})-h(N_k)\right)
$
set $\theta_{k+1} = \theta_{k}.$
Else
$$
\theta_{k+1}:=\frac{(1+r)(h(N_k)-h(\tilde{N}_{k+1}))}{2\left[  f_{\tilde{N}_{k+1}}(x_k)-f_{N_k}(x_k)+ h(N_k)-h(\tilde{N}_{k+1}) \right]}.
$$
\item[\textbf{S3}] Optimization Phase. Choose $N_{k+1}\leq\tilde{N}_{k+1}, $ $ p_{N_{k+1}} \in \mathbb{R}^n $ and $\alpha_k\in(0,1]$ such that 
\begin{equation}
f_{N_{k+1}}(x_k+\alpha_k p_{N_{k+1}}(x_k))-f_{\tilde{N}_{k+1}}(x_k)\leq -\gamma \alpha_k ||p_{N_{k+1}}(x_k)||^2,
\label{S31}
\end{equation}
\begin{equation}
h(N_{k+1})\leq h(\tilde{N}_{k+1})+\bar{\gamma} \alpha_k^2 ||p_{N_{k+1}}(x_k)||^2,
\label{S32}
\end{equation}
\begin{equation}
\Phi(x_k+\alpha_k p_{N_{k+1}}(x_k),N_{k+1},\theta_{k+1})-\Phi(x_k,N_k,\theta_{k+1})\leq \frac{1-r}{2}\bigl( h(\tilde{N}_{k+1})-h(N_k)\bigr).
\label{S3NK}
\end{equation}
\item[\textbf{S4}] Set $p_k=p_{N_{k+1}}(x_k)$,  $x_{k+1}=x_k+\alpha_k p_k$, $k:=k+1$ and go to S1. 
\end{itemize}

Let us briefly discuss the key points of \texttt{IR-NS} algorithm. In Step S1 the feasibility is improved, i.e. a new  sample size candidate $\tilde{N}_{k+1}$ is chosen.  Additionally, the value  $f_{\tilde{N}_{k+1}}(x_k)$ might increase with respect to  $f_{N_k}(x_k)$ by at most $\beta h(N_k).$ Thus, optimality can deteriorate with respect to the previous iteration but the deterioration is controlled by the function $ h, $ i.e., it depends on the accuracy of the objective function. So, for smaller $ N_k $ - which means looser approximation of the true objective function, the deterioration  of optimality can be relatively large, as we assume that we are still far away from solution. Parameter $\beta$ can be arbitrary large, but finite. In some applications (ex. finite sums) one can prove that such $\beta$ exists under standard conditions. However, in general, since we do not impose  differentiability of the objective function nor any other special property,  the following assumption is needed.
 
\begin{assumption}
Suppose that there exists $\beta$ such that \eqref{Ruslov2NK} holds for each $k.$
\label{pp_beta}
\end{assumption}

The penalty parameter is updated in such way that it ensures a decrease of the merit function  as  stated in Lemma \ref{lema31}. Moreover, it can also be shown that the sequence of $\theta_k$ is non-increasing and bounded away from zero which prevents  the  optimality part to vanish from the merit function. The proof of  Lemma \ref{lema31} is fundamentally the same as in \cite[Lemma 2.1]{NKJMM} and thus we omit it here. 

\begin{lemma} \cite{NKJMM}
Let Assumptions A\ref{pp_Lip}- A\ref{pp_beta} hold.   Then the sequence $\lbrace \theta_k \rbrace$ generated by Algorithm \texttt{IR-NS} is positive and non-increasing, the inequality 
$$
\Phi(x_k,\tilde{N}_{k+1},\theta_{k+1})-\Phi(x_k,N_k,\theta_{k+1})\leq \frac{1-r}{2}\left( h(\tilde{N}_{k+1})-h(N_k)\right) 
$$ 
holds and there exists $\theta^*>0$ such that
$\lim_{k\rightarrow\infty}\theta_k=\theta^*.$
\label{lema31}
\end{lemma}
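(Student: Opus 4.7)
The plan is to exploit the two-branch structure of Step S2 and verify the four claims (positivity of $\theta_{k+1}$, monotonicity, the merit-decrease inequality, and a uniform lower bound that forces convergence to $\theta^*>0$) in turn, following closely the argument in \cite[Lemma 2.1]{NKJMM} and using only the restoration condition \eqref{Ruslov2NK}, the inequality $h(\tilde{N}_{k+1})\le r\,h(N_k)$, and Assumption A\ref{pp_beta}.

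First, I would introduce the shorthand $A_k=f_{\tilde{N}_{k+1}}(x_k)-f_{N_k}(x_k)$ and $D_k=h(N_k)-h(\tilde{N}_{k+1})\ge 0$, so the update formula in the \emph{else} branch becomes
$$
\theta_{k+1}=\frac{(1+r)D_k}{2(A_k+D_k)}.
$$
From S1 we have $D_k\ge (1-r)h(N_k)\ge 0$ and $A_k\le \beta h(N_k)$ by Assumption A\ref{pp_beta}, and of course $D_k\le h(N_k)$. The first step is to check positivity: if the \emph{if} branch fires then $\theta_{k+1}=\theta_k>0$ by induction; if the \emph{else} branch fires then the very fact that the condition in S2 failed rules out the degenerate case $h(N_k)=0$ (which would make both sides zero), so $D_k>0$ and, since $A_k+D_k\ge D_k>0$, the fraction is strictly positive.

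Next I would prove $\theta_{k+1}\le\theta_k$. In the \emph{if} branch this is immediate. In the \emph{else} branch the triggering condition
$$
\theta_k A_k-(1-\theta_k)D_k > -\tfrac{1-r}{2}D_k
$$
can be rearranged, using $D_k>0$, to
$$
\theta_k(A_k+D_k) > \tfrac{1+r}{2}D_k,
$$
which is exactly $\theta_k>\theta_{k+1}$. For the third claim, I plug the value of $\theta_{k+1}$ from the \emph{else} branch back into $\Phi(x_k,\tilde{N}_{k+1},\theta_{k+1})-\Phi(x_k,N_k,\theta_{k+1})=\theta_{k+1}A_k-(1-\theta_{k+1})D_k$ and verify by direct algebra that it equals $-\tfrac{1-r}{2}D_k$; in the \emph{if} branch the required inequality with $\theta_{k+1}$ follows from the one with $\theta_k$ because the left-hand side is a linear, nondecreasing function of the penalty parameter along $A_k\ge 0$ (when $A_k<0$ the inequality is trivial since both sides are $\le 0$ and the left is even smaller).

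Finally, the convergence to $\theta^*>0$: since $\{\theta_k\}$ is non-increasing and positive, it converges to some $\theta^*\ge 0$. The key remaining point, and the only mildly delicate step, is to bound the sequence away from zero. Using the estimates above,
$$
\theta_{k+1}\ge \frac{(1+r)D_k}{2(A_k+D_k)}\ge \frac{(1+r)(1-r)h(N_k)}{2(\beta+1)h(N_k)}=\frac{1-r^2}{2(\beta+1)}>0
$$
in the \emph{else} branch, and the \emph{if} branch does not decrease $\theta_k$ at all. Hence $\theta_k\ge\min\{\theta_0,(1-r^2)/(2(\beta+1))\}$ for all $k$, and therefore $\theta^*>0$. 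The main obstacle I anticipate is the algebraic bookkeeping in verifying the merit-function inequality in the \emph{if} branch when $A_k$ can have either sign; the monotonicity of the map $\theta\mapsto \theta A_k-(1-\theta)D_k$ together with a case split on $\mathrm{sign}(A_k)$ handles it cleanly.
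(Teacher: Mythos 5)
Your proof is correct and is essentially the same argument the paper relies on (the paper omits the proof, citing Lemma 2.1 of \cite{NKJMM}): writing $A_k=f_{\tilde{N}_{k+1}}(x_k)-f_{N_k}(x_k)$, $D_k=h(N_k)-h(\tilde{N}_{k+1})$, you get exact cancellation $\theta_{k+1}A_k-(1-\theta_{k+1})D_k=-\tfrac{1-r}{2}D_k$ under the update formula, monotonicity from the failure of the S2 test, and the uniform lower bound $(1-r^2)/(2(\beta+1))$ from $D_k\ge(1-r)h(N_k)$, $A_k\le\beta h(N_k)$, $D_k\le h(N_k)$. Two cosmetic points: positivity of the denominator should be justified by the failed test itself (your own inequality $\theta_k(A_k+D_k)>\tfrac{1+r}{2}D_k>0$) rather than by ``$A_k+D_k\ge D_k$'', since $A_k$ may be negative; and in the \emph{if} branch the inequality with $\theta_{k+1}$ is literally the test that fired (because $\theta_{k+1}=\theta_k$), so the monotonicity-in-$\theta$ digression, whose $A_k<0$ parenthetical is not quite right, is unnecessary.
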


In Step S3 we chose the sample size to be used in the subsequent iteration. Notice that one possible choice is $N_{k+1}=\tilde{N}_{k+1}$ since \eqref{S31}-\eqref{S32} are satisfied due to Lemma \ref{teorema_kineski_armijo} and, as we will prove in Lemma \ref{wd}, there exists $\alpha_k$ which satisfies  inequality \eqref{S3NK}  in that case as well. On the other hand, in order to decrease the overall costs, we try to decrease the sample size if it still provides the  decrease in the merit function \eqref{S3NK}.  The resulting sample size $N_{k+1}$ can be larger, equal or smaller than $N_k$. Our numerical study shows that allowing the decrease of a sample size is beneficial in terms of overall function evaluations. In practical implementations, we estimate the sample size lower bound $N^{trial}_{k+1}$ derived from \eqref{S3NK} and let $N_{k+1} \in \{N^{trial}_{k+1},\lceil(N^{trial}_{k+1}+\tilde{N}_{k+1})/2\rceil, \tilde{N}_{k+1}\}$. We use the backtracking technique for finding $\alpha_k$, but at each backtracking step we try all three candidate values for $N_{k+1}$. This is just one possible approach and the optimal strategy remains an open question, probably problem-dependent. 

\begin{lemma} \label{wd}
Let  Assumptions  A\ref{pp_Lip}- A\ref{pp_beta} hold. Then, there exists $\gamma>0$ such that Step 3 of Algorithm \texttt{IR-NS} is well-defined.
\end{lemma}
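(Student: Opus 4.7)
The plan is to verify existence by taking the ``trivial'' choice $N_{k+1} = \tilde{N}_{k+1}$ singled out in the discussion preceding the lemma, with $p_{N_{k+1}}(x_k)$ a descent direction supplied by Assumption A\ref{pkop}. With this choice, \eqref{S32} collapses to $h(\tilde{N}_{k+1}) \leq h(\tilde{N}_{k+1}) + \bar{\gamma}\alpha_k^2 \|p_{N_{k+1}}(x_k)\|^2$, which holds automatically for every $\alpha_k > 0$, so \eqref{S32} can be immediately discarded.

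Next I would handle \eqref{S31}. Under the choice $N_{k+1} = \tilde{N}_{k+1}$, condition \eqref{S31} is exactly the subgradient Armijo condition for $f_{\tilde{N}_{k+1}}$ at $x_k$. Lemma \ref{teorema_kineski_armijo} applied to $f_{\tilde{N}_{k+1}}$ then yields some $\tau := \tau_{\tilde{N}_{k+1}}(x_k) > 0$ and a constant $\gamma = \eta m /(2M^2)$ with $\eta \in (0,1)$ (independent of $k$, since $m, M$ are uniform bounds from Assumption A\ref{pkop}) such that \eqref{S31} holds for every $\alpha_k \in [0, \tau]$. This fixes the constant $\gamma$ appearing in Step S3.

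Finally, \eqref{S3NK} would be obtained by the additive split
$$
\Phi(x_k+\alpha_k p_{N_{k+1}}, N_{k+1}, \theta_{k+1}) - \Phi(x_k, N_k, \theta_{k+1}) = A + B,
$$
where $A = \Phi(x_k,\tilde{N}_{k+1},\theta_{k+1})-\Phi(x_k,N_k,\theta_{k+1})$ and $B = \theta_{k+1}\bigl[f_{\tilde{N}_{k+1}}(x_k+\alpha_k p_{N_{k+1}}) - f_{\tilde{N}_{k+1}}(x_k)\bigr]$. Lemma \ref{lema31} gives $A \leq \tfrac{1-r}{2}\bigl(h(\tilde{N}_{k+1}) - h(N_k)\bigr)$, while \eqref{S31} (already established) together with $\theta_{k+1} > 0$ (also from Lemma \ref{lema31}) gives $B \leq -\theta_{k+1}\gamma\alpha_k\|p_{N_{k+1}}(x_k)\|^2 \leq 0$. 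Adding these yields \eqref{S3NK} for every $\alpha_k \in (0, \min\{1,\tau\}]$, so Step S3 admits a valid $(N_{k+1}, \alpha_k)$.

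The only potential obstacle is checking that the Armijo reduction of $f_{\tilde{N}_{k+1}}$ does not conflict with the merit-function inequality already enforced by Lemma \ref{lema31}; but because the Armijo contribution $B$ has the right sign and the positivity of $\theta_{k+1}$ is guaranteed, no delicate trade-off between $\alpha_k$ and the penalty parameter arises. The more refined choice $N_{k+1} < \tilde{N}_{k+1}$ discussed in the paragraph preceding the lemma is thus optional: it can be used to cut costs in practice, but is not needed for well-definedness.
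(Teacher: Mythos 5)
Your proposal is correct and follows essentially the same route as the paper's own proof: take $N_{k+1}=\tilde{N}_{k+1}$ with the same sample (so \eqref{S32} is trivial), invoke Lemma \ref{teorema_kineski_armijo} for \eqref{S31}, and split the merit-function difference into the part controlled by Lemma \ref{lema31} plus the Armijo term $\theta_{k+1}\bigl(f_{\tilde{N}_{k+1}}(x_k+\alpha p)-f_{\tilde{N}_{k+1}}(x_k)\bigr)\leq 0$. No gaps to report.
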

\begin{proof}
The algorithm is well defined if there exists a choice of $ N_{k+1}\leq  \tilde{N}_{k+1} $ and a descent direction $ p_k $ such that (\ref{S31}) - (\ref{S3NK}) hold for some $ \alpha_k > 0 $ and a suitable $ \gamma > 0 $ for each $ k. $ Let us take  $N_{k+1}=\tilde{N}_{k+1} $ and retain the same sample so that $f_{N_{k+1}}=f_{\tilde{N}_{k+1}}$. In that case Lemma \ref{teorema_kineski_armijo} implies the existence of  $\tau_k:=\tau_{N_{k+1}}(x_k)>0$ such that the inequality \eqref{S31} holds for all $\alpha \in [0,\tau_k]$. Since \eqref{S32} is trivially satisfied for this choice of $ N_{k+1}$, it remains to prove the existence of $\alpha_k \in [0,\tau_k]$ such that \eqref{S3NK} holds. 
By \eqref{S31}, \eqref{S32} and Lemma \ref{lema31},  for all $\alpha \in [0,\tau_k],$
\begin{align*}
&\Phi(x_{k}+\alpha p_k,N_{k+1},\theta_{k+1})-\Phi(x_k,N_k,\theta_{k+1})\\
& = \Phi(x_{k}+\alpha p_k,N_{k+1},\theta_{k+1})-\Phi(x_k,\tilde{N}_{k+1},\theta_{k+1})+\Phi(x_k,\tilde{N}_{k+1},\theta_{k+1})-\Phi(x_k,N_k,\theta_{k+1})\\ 
& \leq \Phi(x_{k}+\alpha p_k,N_{k+1},\theta_{k+1})-\Phi(x_k,\tilde{N}_{k+1},\theta_{k+1}) + \frac{1-r}{2}\left( h(\tilde{N}_{k+1})-h(N_k)\right)\\
& =\theta_{k+1} \left( f_{N_{k+1}}(x_k+\alpha p_k)-f_{\tilde{N}_{k+1}}(x_k)\right) +\frac{1-r}{2}\left( h(\tilde{N}_{k+1})-h(N_k)\right) \\
& \leq -\theta_{k+1}\gamma\alpha||p_k||^2+\frac{1-r}{2}\left( h(\tilde{N}_{k+1})-h(N_k)\right) \leq \frac{1-r}{2}\left( h(\tilde{N}_{k+1})-h(N_k)\right).
\end{align*}
%Therefore, taking $\alpha_k \leq \min\lbrace\tau_k, \frac{\theta^*\gamma}{(1-\theta^*)\overline{\gamma}}\rbrace$ implies $\alpha_k \leq \frac{\theta_{k+1}\gamma}{(1-\theta_{k+1})\overline{\gamma}} $ and we obtain \\
%$-\theta_{k+1}\gamma\alpha_k||p_k||^2+(1-\theta_{k+1})\overline{\gamma}\alpha_k^2||p_k||^2\leq 0,$ i.e., \eqref{S3NK} holds. 

 Therefore,  \eqref{S3NK} holds  for all $\alpha \in [0,\tau_k]$.
\end{proof}

Notice that in the above Lemma \ref{wd} we proved only that the algorithm is well defined, i.e., we can always take $ N_{k+1}=\tilde{N}_{k+1} $ and the  $ (k+1)$th  iteration is well defined. However, other possibilities for $ N_{k+1} $ exists and we discuss some of them in Section  4. Since the sample size sequence is not monotonically increasing in general, it is not obvious that $ N_k$ tends to infinity.   Nevertheless, using essentially the same proof as in \cite[Theorem 2.1]{NKJMM}, we conclude that infeasibility measure tends to zero yielding the result of $\lim_{k \to \infty}N_k=\infty$. Specially, for the finite sum problem  we conclude that the full sample is reached after a finite number of iterations. The proof of Theorem 2.1 in \cite{NKJMM} contains an important relation stated below
 \begin{equation} \label{h}
 \sum_{k=0}^{\infty} h(N_k) \leq C_1<\infty,
\end{equation}
where $ C_1 > 0 $ is a constant, that we will use in further convergence analysis presented in the next Section.

Let us now provide more insights regarding the stochastic concept of the proposed algorithm. IR-NS yields stochastic sequence of iterates $x_k$. The stochastic nature comes from the sequence of random variables $ N_k $ that determine the samples to be used for the SAA functions. Assume that we are at  iteration $k$ and $x_k$ is known. Denote by $\cal{F}_k$ the $\sigma$-algebra generated by $x_0,...,x_k$, i.e., by random variables that determine $f_{\tilde{N}_{j}}, j=1,...,k$ and  $f_{N_{j}}, j=0,...,k. $  Since the samples are assumed to be i.i.d., we have conditionally unbiased estimators. More precisely, at the beginning of  Step S1 of the algorithm a new sample size $\tilde{N}_{k+1}$ is chosen and  a random sample is generated to obtain $f_{\tilde{N}_{k+1}}$. Thus, since $x_k$ is $\cal{F}_k$-measurable (i.e., known at that point of the algorithmic procedure), there holds 

\begin{equation}
    \label{unbiased} 
    E\left[ f_{\tilde{N}_{k+1}} (x_k) | \cal{F}_k\right]=f(x_k), 
\end{equation}
where $E\left[\cdot | \cal{F}_k\right]$ denotes the conditional expectation with respect to $\cal{F}_k$ [35]. 
Also $ E\left[f_{N_{k+1}} (x_k) | \cal{F}_k\right]=f(x_k)$. However, $ E\left[ f_{N_{k+1}} (x_{k+1}) | \cal{F}_k\right]$ is not equal to $f(x_{k+1})$ in general because $x_{k+1}$ is dependent on $N_{k+1}$. More precisely, the second round of stochastic influence within iteration $k$ comes at the Step S3 where we choose $N_{k+1}$ which may  yield totally different sample for $f_{N_{k+1}}$ with respect to $f_{\tilde{N}_{k+1}} $ in general (each trial sample size may yield different sample). %Exceptionally, if we use cumulative samples strategy, the trial  samples in S3 are just portions of previously generated sample of size $\tilde{N}_{k+1}$.  
Moreover, the direction $p_{N_{k+1}}(x_{k+1})$ and the step size $\alpha_k$ directly depend on the generated samples and thus we lose the martingale property. This is a common situation in stochastic line search (see [14] for instance). In Step S4, we set the next iterate and return to Step S1, repeating the procedure.

\section{Convergence analysis}
The convergence analysis is performed under the set of standard assumptions for stochastic problems stated below. We analyze conditions needed for a.s. convergence of IR-NS and provide complexity result at the end of this section. The two assumptions  stated in this Section are needed to ensure that the  Uniform Law of Large Numbers (ULLN) holds.

\begin{assumption} The objective function $f$ has bounded level sets.
\label{pp_xinD}
\end{assumption}

This assumption holds if the objective function is strongly convex for example, and we have the following result.

\begin{lemma}
 Let  Assumptions  A\ref{pp_Lip}-A\ref{pp_xinD} hold. Suppose that there exists a constant $C_0$ such that $F(x_0,\xi) \leq C_0$ for any $\xi$. Then    $f(x_k) \leq C_2$ holds for all $k$, i.e., $\lbrace x_k\rbrace_{k\in \mathbb{N}} \subseteq D,$ where 
$$D=\{x \in \mathbb{R}^n \; | \; f(x)\leq C_2\}$$
and $C_2=C_0+ 2 \beta C_1$.
\label{nova}
\end{lemma}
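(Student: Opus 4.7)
The plan is to build a uniform pointwise (almost sure) upper bound on $f_{N_k}(x_k)$ by telescoping the per-iteration inequalities enforced by the algorithm, and then lift that bound to $f(x_k)$ using the unbiasedness of the freshly sampled restoration estimator $f_{\tilde{N}_{k+1}}(x_k)$.

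First I would combine Steps S1 and S3. Since the right-hand side of (\ref{S31}) is non-positive, it implies $f_{N_{k+1}}(x_{k+1}) \leq f_{\tilde{N}_{k+1}}(x_k)$; chaining this with (\ref{Ruslov2NK}) yields the master recursion
\begin{equation*}
f_{N_{k+1}}(x_{k+1}) \leq f_{N_k}(x_k) + \beta h(N_k).
\end{equation*}
A straightforward induction gives $f_{N_k}(x_k) \leq f_{N_0}(x_0) + \beta \sum_{j=0}^{k-1} h(N_j)$. The hypothesis $F(x_0,\xi) \leq C_0$ for every $\xi$ gives $f_{N_0}(x_0) \leq C_0$, and (\ref{h}) bounds the total infeasibility by $C_1$, so $f_{N_k}(x_k) \leq C_0 + \beta C_1$ for every $k$.

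Next I would apply (\ref{Ruslov2NK}) one more time, together with the elementary inequality $h(N_k) \leq \sum_{j=0}^{\infty} h(N_j) \leq C_1$ that follows from (\ref{h}) and nonnegativity of the summands, to obtain the a.s.\ pointwise estimate
\begin{equation*}
f_{\tilde{N}_{k+1}}(x_k) \leq f_{N_k}(x_k) + \beta h(N_k) \leq C_0 + 2\beta C_1 = C_2.
\end{equation*}
Taking conditional expectation with respect to $\mathcal{F}_k$ and invoking the unbiasedness relation (\ref{unbiased}) then delivers $f(x_k) = E[f_{\tilde{N}_{k+1}}(x_k) \mid \mathcal{F}_k] \leq C_2$, and Assumption A\ref{pp_xinD} places $x_k$ in the bounded level set $D$.

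The only step I expect to require genuine care is the last one: one must argue that the a.s.\ pointwise bound on the freshly sampled estimator $f_{\tilde{N}_{k+1}}(x_k)$ survives conditioning, so that (\ref{unbiased}) transfers it to $f(x_k)$ itself. Independence of the new i.i.d.\ batch from the $\sigma$-algebra $\mathcal{F}_k$, exactly as discussed in the paragraph preceding (\ref{unbiased}), is what makes this work. Everything else is routine telescoping and bookkeeping with the already-established summability (\ref{h}).
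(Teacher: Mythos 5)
Your proposal is correct and follows essentially the same route as the paper's proof: the same chaining of \eqref{Ruslov2NK} and \eqref{S31} into the telescoping recursion, the same use of \eqref{h} and $f_{N_0}(x_0)\leq C_0$ to bound $f_{N_k}(x_k)$ by $C_0+\beta C_1$, and the same final transfer to $f(x_k)$ via the conditional unbiasedness relation \eqref{unbiased}. The only cosmetic difference is that you establish the almost sure bound $f_{\tilde{N}_{k+1}}(x_k)\leq C_2$ before conditioning, whereas the paper takes the conditional expectation of the inequality $f_{\tilde{N}_{k+1}}(x_k)\leq f_{N_k}(x_k)+\beta h(N_k)$ directly; the content is identical.
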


\begin{proof} The set $ D $ is compact by Assumption A\ref{pp_xinD}. Using inequalities \eqref{Ruslov2NK}-\eqref{S31}, for all $k$  we obtain 
$$ f_{N_{k+1}}(x_{k+1})\leq f_{\tilde{N}_{k+1}}(x_k) -\gamma \alpha_k ||p_{N_{k+1}}(x_k)||^2\leq f_{N_k}(x_k)+ \beta h(N_k).$$ 
 Furthermore, using the induction argument and (\ref{h}) we get 
\begin{equation*}
\label{rev1a} 
f_{N_{k+1}}(x_{k+1})\leq  f_{N_0}(x_0)+ \beta \sum_{j=0}^{k} h(N_j)\leq f_{N_0}(x_0)+ \beta C_1,
\end{equation*}
for all $k=0,1,...$. 
Obviously, the assumption of uniformly bounded $F$ at the initial point $x_0$ implies that $f_{N_0}(x_0)\leq C_0$ and we obtain \begin{equation} \label{rev1} f_{N_{k}}(x_{k})\leq  C_0 + \beta C_1,\end{equation}
for all $k=1,2,...$.
%Denote by ${\cal{F}}_{k}$ the $\sigma$-algebra generated by $x_0,...,x_{k}$. Since the sample is assumed to be i.i.d. and the approximate functions $ f_{N_k} $ are computed as sample average, we conclude that for all $k$ there holds
%$$E(f_{\tilde{N}_{k+1}}(x_{k})| {\cal{F}}_{k} )=f(x_k).$$
Finally, by \eqref{unbiased} and  inequalities  \eqref{Ruslov2NK} and \eqref{rev1}  we get 

$$f(x_k)= E\left[f_{\tilde{N}_{k+1}}(x_{k})| {\cal{F}}_{k} \right] \leq E\left[f_{N_k}(x_k)+ \beta h(N_k) | {\cal{F}}_{k} \right]\leq  C_0+2 \beta C_1:=C_2,$$
 which completes the proof. 
\end{proof}

\begin{assumption} 
The function $F$ is dominated by an integrable function on a bounded open set $\tilde{D}^0$ such that  $D \subset \tilde{D}^0.$
\label{pp_domin}
\end{assumption}
Under the stated  assumptions the ULLN \cite{Shapiro} implies  that $ \lim_{N \rightarrow \infty}\sup_{x \in D} |f_{N}(x)-f(x)|=0$ a.s.   
Notice that this equality holds trivially if the sample is finite and the full sample is eventually achieved and retained.
Denote by $X^*=\lbrace x\in \mathbb{R}^n: f(x)=\inf_y f(y):=f^*\rbrace$ the set of solutions for problem  \eqref{prob1}. 
Define 
\begin{equation}
t_k:=\max_{x,y \in \tilde{D}} \lbrace |f(x)-f_{N_{k+1}}(x)|+|f(y)-f_{\tilde{N}_{k+1}}(y)| \rbrace,
\label{tk}
\end{equation}
where $\tilde{D}$ is a compact enlargement of $D,$ i.e., $\tilde{D}$ is the closure of an open set  $\tilde{D}^0 \supset D.$ Therefore, both $ D $ and $ \tilde{D}$ are compact sets and $ D \subsetneq \tilde{D}. $  
Notice that ULLN and the fact $ h(N_k) \to 0 $  imply that $t_k \to 0$  a.s. if $ N_k \to \infty. $ Let us analyse the convergence depending on  properties of the step size sequence $ \{\alpha_k\} $ and the error sequence  $\{t_k\}$.

\begin{theorem}
Let  Assumptions  A\ref{pp_Lip}-A\ref{pp_domin} hold and $\lbrace x_k \rbrace$ be a sequence generated by Algorithm \texttt{IR-NS}. If  $\alpha_k\geq \overline{\alpha}>0$ for all $k \in \mathbb{N}$ then there exists an accumulation point $x^*$ of $\lbrace x_k \rbrace$  which is a solution of problem \eqref{prob1} a.s.
\label{th4.1}
\end{theorem}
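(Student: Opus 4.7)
My plan is to first establish that $\|p_k\|\to 0$ by a telescoping/summability argument, and then use this together with the subgradient inequality for the convex functions $f_{N_{k+1}}$ and the ULLN on $\tilde D$ to conclude that any accumulation point of $\{x_k\}$ minimizes $f$ almost surely.

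\emph{Step 1 ($\|p_k\|\to 0$).} Chaining the restoration bound \eqref{Ruslov2NK} at $x_k$ with the optimization-phase descent \eqref{S31} yields
\[
f_{N_{k+1}}(x_{k+1}) \leq f_{N_k}(x_k) + \beta h(N_k) - \gamma\alpha_k \|p_k\|^2.
\]
Telescoping from $k=0$ to $K$, using the uniform lower bound $f_{N_k}\geq C$ from Assumption A\ref{pp_Lip}, the hypothesis $\alpha_k\geq\bar\alpha>0$, and the summability $\sum_k h(N_k)\leq C_1$ from \eqref{h}, one obtains $\gamma\bar\alpha \sum_{k=0}^{\infty}\|p_k\|^2 \leq f_{N_0}(x_0)-C+\beta C_1 < \infty$. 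Hence $\|p_k\|\to 0$, and since $p_k=-B(x_k)\bar{g}_{N_{k+1}}(x_k)$ with $mI\preceq B(x_k)$, this forces $\|\bar{g}_{N_{k+1}}(x_k)\|\leq \|p_k\|/m\to 0$.

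\emph{Step 2 (limit in the subgradient inequality).} By Lemma \ref{nova} the iterates remain in the compact set $D\subset\tilde D^0$, so some subsequence $x_{k_j}\to x^*\in D$. For every $y\in\tilde D^0$, convexity of $f_{N_{k_j+1}}$ gives
\[
f_{N_{k_j+1}}(y)\geq f_{N_{k_j+1}}(x_{k_j}) + \bar{g}_{N_{k_j+1}}(x_{k_j})^T(y-x_{k_j}).
\]
Since $N_k\to\infty$, the ULLN under Assumptions A\ref{pp_xinD}-A\ref{pp_domin} yields $\sup_{x\in\tilde D}|f_N(x)-f(x)|\to 0$ a.s., so the left-hand side tends to $f(y)$ and $f_{N_{k_j+1}}(x_{k_j})\to f(x^*)$ (using continuity of $f$). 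Combined with $\bar{g}_{N_{k_j+1}}(x_{k_j})\to 0$ and the boundedness of $y-x_{k_j}$, passing to the limit gives $f(y)\geq f(x^*)$ for every $y\in\tilde D^0$. Thus $x^*$ is a local minimizer of the convex function $f$, hence a global minimizer, i.e., $x^*\in X^*$ almost surely.

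The main obstacle I expect is precisely the passage to the limit in the subgradient inequality: the ULLN provides uniform convergence only on the compact set $\tilde D$, so the limit $f_{N_{k_j+1}}(y)\to f(y)$ is directly available only for $y\in\tilde D^0$. The remedy is convexity: since a local minimizer of a convex function on $\mathbb{R}^n$ is automatically global, it suffices to establish $f(y)\geq f(x^*)$ on the open neighborhood $\tilde D^0$ of $x^*$. A secondary technicality is that every convergence is almost sure, so the exceptional null set must be chosen independently of $y$; this is automatic once the argument is phrased on the joint full-measure event $\{N_k\to\infty\}\cap\{\sup_{\tilde D}|f_N-f|\to 0\}$.
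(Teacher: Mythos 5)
Your proof is correct, but the first half takes a genuinely different route from the paper. The paper transfers the descent condition to the true objective, obtaining $f(x_{k+1})\leq f(x_k)-\eta\overline{\alpha}\|\overline{g}_k\|^2+t_k$ with the ULLN-based error $t_k\to 0$ a.s., and then runs a contradiction argument to get only $\liminf_k\|\overline{g}_k\|=0$ a.s.; you instead stay at the SAA level, telescoping $f_{N_{k+1}}(x_{k+1})\leq f_{N_k}(x_k)+\beta h(N_k)-\gamma\alpha_k\|p_k\|^2$ and exploiting the IR summability bound \eqref{h} together with the uniform lower bound $C$ from Assumption A\ref{pp_Lip}. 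This yields $\sum_k\alpha_k\|p_k\|^2<\infty$, hence (with $\alpha_k\geq\overline{\alpha}$ and $B\succeq mI$) the stronger, essentially deterministic conclusion $\|\overline{g}_k\|\to 0$ along the whole sequence, with randomness confined to the ULLN step; it is in fact the mechanism the paper only deploys in Theorem \ref{th4.3}, where $\sum_k t_k<\infty$ must be assumed, whereas your $\sum_k h(N_k)<\infty$ comes for free from the algorithm. The paper's route buys reusable infrastructure: inequality \eqref{nejednakost_sa_tk} is the workhorse for Theorems \ref{th4.2novo} and \ref{th_complexity}, which your Step 1 estimate does not replace. Your second half (subsequence in the compact set $D$ from Lemma \ref{nova}, subgradient inequality for $f_{N_{k_j+1}}$, ULLN on $\tilde D$, local-implies-global minimality by convexity) coincides with the paper's argument, and your handling of the two technical points—restricting to $y\in\tilde{D}^0$ and choosing the exceptional null set independently of $y$ via the uniform convergence event—is sound; note only that, like the paper, you use Lemma \ref{nova} and hence implicitly its hypothesis $F(x_0,\xi)\leq C_0$, and that $N_k\to\infty$ must be quoted from the IR argument preceding \eqref{h}, as you do.
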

\begin{proof} Denote $\bar{g}_k=\bar{g}_{N_k}(x_k).$
Then Assumption A\ref{pkop} and  \eqref{S31} imply 
$$f_{N_{k+1}}(x_{k+1}) \leq f_{\tilde{N}_{k+1}}(x_{k})-\gamma \alpha_k ||p_k||^2 \leq f_{\tilde{N}_{k+1}}(x_{k})-\eta \alpha_k ||\overline{g}_k||^2, $$
where $\eta=\gamma m^2.$ Furthermore,
\begin{align*}
f(x_{k+1})&\leq f_{\tilde{N}_{k+1}}(x_{k})-\eta\alpha_k ||\overline{g}_k||^2+f(x_{k+1})-f_{N_{k+1}}(x_{k+1})\\
&\leq
f(x_k)-\eta\alpha_k ||\overline{g}_k||^2+|f(x_{k+1})-f_{N_{k+1}}(x_{k+1})|+|f_{\tilde{N}_{k+1}}(x_{k})-f(x_{k})|.
\end{align*}
From the definition of $t_k $ \eqref{tk}, we obtain
\begin{equation}
f(x_{k+1})\leq f(x_k)-\eta\bar{\alpha} ||\overline{g}_k||^2+t_k.
\label{nejednakost_sa_tk}
\end{equation}
We will show that 
$\liminf_{k\rightarrow\infty}||\overline{g}_k||^2=0. $
Assume the  contrary, i.e., that  $||\overline{g}_k||^2\geq \varrho>0$ for some $\varrho>0$ and all $ k. $ 
Then
$\eta\overline{\alpha} ||\overline{g}_k||^2\geq \eta \overline{\alpha}\varrho>0.$
Since $t_k \to 0$ a.s., there exists $\overline{k}$ such that for all $k\geq \overline{k}$ there holds 
$t_k\leq \frac{1}{2}\eta \overline{\alpha} ||\overline{g}_k||^2 \text{ a.s.}$ and thus \eqref{nejednakost_sa_tk} implies $f(x_{k+1})\leq f(x_k)-\eta\overline{\alpha}/2$ a.s. Equivalently, for all $s \in \mathbb{N}$ we have 
\begin{equation}
 f(x_{\overline{k}+s})\leq f(x_{\overline{k}})-\frac{s}{2}\eta\overline{\alpha}\varrho \quad \mbox{a.s.}
\label{nej_teorema}
\end{equation}
Letting   $ s \to \infty $  yields a contradiction with the Assumption A\ref{pp_Lip} which implies that $f$ is bounded from bellow. Therefore, we conclude that there there exists $K\subseteq \mathbb{N}$ such that 
$ \lim_{k\in K}\overline{g}_k=0 $ a.s.
Since $\lbrace x_k \rbrace \subset D$  and $D$ is compact there  follows that there exist   $ K_1\subseteq K $ and $x^* \in D$ such that $x^*=\lim_{k\in K_1}x_k.$
Now, using the fact that  $\overline{g}_k \in \partial f_{N_{k+1}}(x_k)$, for all $x \in \mathbb{R}^n$ we have 
$ f_{N_{k+1}}(x)\geq f_{N_{k+1}}(x_k) +\overline{g}^T_k(x-x_k).$
Thus, for  arbitrary $ x \in \tilde{D} $ we have
\begin{align}
f(x)
&\geq
f_{N_{k+1}}(x_k)+\overline{g}^T_k(x-x_k)+f(x)-f_{N_{k+1}}(x) \nonumber\\
&= 
f(x_k)+\overline{g}^T_k(x-x_k)-
\left(f_{N_{k+1}}(x)-f(x)+f(x_k)-f_{N_{k+1}}(x_k)\right)\nonumber\\
&\geq
f(x_k)+\overline{g}^T_k(x-x_k)-\left(|f(x)-f_{N_{k+1}}(x)|+|f(x_k)-f_{N_{k+1}}(x_k)| \right).
\label{deodokaza}
\end{align}
Therefore, $f(x)\geq f(x_k)-||\overline{g}_k|| ||x-x_k||-2 t_k$. Taking the limit over $K_1$ and using the fact that  $||x-x_k||$ is bounded, we obtain that for every $x \in \tilde{D}$ there holds 
\begin{equation}
f(x)\geq f(x^*),   \mbox{ a.s.}
\label{nej_teorema2}
\end{equation} 
Recall that  $x^* \in D$ and $\tilde{D}$ is a compact enlargement of $D$ so $x^*$ cannot be on the boundary of $\tilde{D}$ and there exists $\epsilon>0$ such that  $\mathcal{B} (x^*, \epsilon) \subset \tilde{D}$ and we conclude that $x^*$ is  a local minimizer of $f$ a.s.
Since $f$ is assumed to be convex, we conclude that 
$x^* \in X^*$ a.s. 
\end{proof}

We can also prove that every strictly strong accumulation point \cite{yan} is a solution a.s. A point $x^*$ is called strictly strong accumulation point
of the sequence $\lbrace x_k\rbrace_{k\in \mathbb{N}}$ if there exists a subsequence $K\subseteq \mathbb{N}$ and a constant $b \in \mathbb{N}$ such that $\lim_{k_i\in K} x_{k_i} = x^*$ and $k_{i+1}- k_i \leq b$ for any two consecutive elements $k_i, k_{i+1} \in K.$ According to the available literature, \cite{Shapiro, wardi}, and up to the best of our knowledge, stronger statement in a.s. sense is not possible without some additional assumptions on the rate of increase of $ N_k. $ 
\begin{theorem}
Assume that the conditions of Theorem \ref{th4.1} hold.  Then every strictly strong accumulation point of the sequence $\lbrace x_k \rbrace$ is a solution of problem \eqref{prob1} a.s.
\label{th4.2novo}
\end{theorem}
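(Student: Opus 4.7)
The plan is to show that along the strictly strong accumulation subsequence $K=\{k_i\}$ itself we have $\|\bar{g}_{k_i}\|\to 0$ a.s., and then to replay the subgradient-inequality argument from the last paragraph of the proof of Theorem~\ref{th4.1}, now anchored at $x^*$. The uniformly bounded gap $k_{i+1}-k_i\leq b$ is the key ingredient: it will let us upgrade the $\liminf$-type information available from Theorem~\ref{th4.1} into a statement along $K$ itself.

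First, I would apply inequality \eqref{nejednakost_sa_tk} successively for $k=k_i, k_i+1,\ldots, k_{i+1}-1$ and telescope, obtaining
\begin{equation*}
\eta\bar{\alpha}\sum_{j=k_i}^{k_{i+1}-1}\|\bar{g}_j\|^2 \leq f(x_{k_i})-f(x_{k_{i+1}}) + \sum_{j=k_i}^{k_{i+1}-1} t_j.
\end{equation*}
By continuity of $f$ on the compact set $\tilde{D}$ (from convexity together with Assumption~A\ref{pp_domin}) and $x_{k_i},x_{k_{i+1}}\to x^*$, the first bracket vanishes. The $t_j$-sum has at most $b$ terms and each $t_j\to 0$ a.s.\ by ULLN, so it vanishes a.s.\ as well. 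Hence $\sum_{j=k_i}^{k_{i+1}-1}\|\bar{g}_j\|^2\to 0$, and in particular $\|\bar{g}_{k_i}\|\to 0$ a.s.

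With $\bar{g}_{k_i}\to 0$ and $x_{k_i}\to x^*$ in hand, I would reuse the chain of estimates leading to \eqref{deodokaza}: for any $x\in\tilde{D}$, since $\bar{g}_{k_i}\in\partial f_{N_{k_i+1}}(x_{k_i})$,
\begin{equation*}
f(x)\geq f(x_{k_i})+\bar{g}_{k_i}^T(x-x_{k_i})-2t_{k_i}.
\end{equation*}
Because $\tilde{D}$ is compact, $\|x-x_{k_i}\|$ stays bounded along $K$, so letting $i\to\infty$ using $\bar{g}_{k_i}\to 0$, $t_{k_i}\to 0$, and continuity of $f$ yields $f(x)\geq f(x^*)$ a.s.\ for every $x\in\tilde{D}$. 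Since $x^*\in D$ is interior to $\tilde{D}$, this makes $x^*$ a local minimizer of $f$, and hence a global minimizer by convexity, so $x^*\in X^*$ a.s.

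The main obstacle is the first step. Without the bounded block length $b$, the subsequence producing $\liminf\|\bar{g}_k\|=0$ in Theorem~\ref{th4.1} need not share indices with $K$, so there is no direct reason for $\bar{g}_{k_i}$ itself to vanish. The telescoping over blocks of length at most $b$ converts the functional decrement $f(x_{k_i})-f(x_{k_{i+1}})\to 0$, which we get for free from continuity and $x_{k_i}\to x^*$, into control on $\bar{g}_{k_i}$, and this is precisely where the strictly-strong hypothesis enters.
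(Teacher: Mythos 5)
Your proposal is correct and follows essentially the same route as the paper: both exploit the bounded gap $k_{i+1}-k_i\leq b$ to apply \eqref{nejednakost_sa_tk} over each block, use $\omega_i=\sum_{j=0}^{b-1}t_{k_i+j}\to 0$ a.s., extract vanishing of $\|\bar{g}_{k_i}\|$, and then replay the subgradient-inequality endgame of Theorem \ref{th4.1} anchored at $x^*$. The only (harmless) difference is that you telescope the whole block and use continuity of $f$ at $x^*$ to get $\|\bar{g}_{k_i}\|\to 0$ along all of $K$ directly, whereas the paper keeps just the first term of the block sum and argues by contradiction to obtain $\liminf_{i}\|\bar{g}_{k_i}\|^2=0$ before passing to a further subsequence; your intermediate conclusion is marginally stronger but leads to the same final argument.
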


\begin{proof}

Let $x^*$ be an arbitrary strictly strong accumulation point of the sequence $\lbrace x_k \rbrace$, i.e., 
$x^*=\lim_{i\to\infty} x_{k_i}$
and $s_i:=k_{i+1}-k_i\leq b$ for every $i \in \mathbb{N}.$ 
Since \eqref{nejednakost_sa_tk} holds for each $k\in\mathbb{N},$ we obtain
$$f(x_{k_{i+1}})\leq f(x_{k_i}) -\eta \overline{\alpha}\sum_{j=0}^{s_i-1}||\overline{g}_{k_{i}+j}||^2 +\sum_{j=0}^{s_i-1}t_{k_{i}+j}\leq f(x_{k_i}) -\eta \overline{\alpha}||\overline{g}_{k_i}||^2 +\omega_i, $$
where $\omega_i=\sum_{j=0}^{b-1}t_{k_i+j}.$ Notice that $\omega_i\rightarrow 0, i\rightarrow\infty$  a.s.
We want to show that 
\begin{equation}
\liminf_{i\to\infty} ||\overline{g}_{k_i}||^2=0 \mbox{ a.s.}
\label{liminf_th4.2}
\end{equation}
Assume the contrary, i.e., for all $i\in \mathbb{N}$ there holds $||\overline{g}_{k_i}||^2\geq \varrho>0$ for some $\varrho>0.$
Then,
$\eta\overline{\alpha} ||\overline{g}_{k_i}||^2\geq \eta \overline{\alpha}\varrho>0$ for all $i\in\mathbb{N}.$ Therefore, there exists $\overline{i}$ such that for all $i\geq \overline{i}$ there holds
$\omega_i\leq \frac{1}{2}\eta \overline{\alpha} \varrho $ a.s. 
and thus
$
f(x_{k_{i+1}})\leq f(x_{k_i})-\frac{1}{2}\eta\overline{\alpha} \varrho $ a.s.
Letting  $i\to\infty$ in the last inequality we obtain 
$$f(x^*)\leq f(x^*) -\frac{1}{2}\eta\overline{\alpha} \varrho <  f(x^*),$$
which is contradiction. So, \eqref{liminf_th4.2} holds and repeating the steps \eqref{nej_teorema}-\eqref{nej_teorema2} from the proof of Theorem \ref{th4.1}, we obtain the result, i.e. 
$x^* \in X^*$ a.s.
\end{proof}

Next, we show that the convergence result as in Theorem \ref{th4.1} can be obtained under weaker assumptions on the step size sequence, but assuming  that the sample size $N_k$ is eventually increased fast enough such that  $\sum_{k=0}^{\infty} t_k< \infty$. For instance, if the sample is cumulative, the log bound  given in Proposition 3.5 of  \cite{HT2} holds and $\sum_{k=0}^{\infty} t_k< \infty$ is true if $N_k\geq e^k$. Therefore, one can switch to exponential growth after a certain number of iterations of IR-NS algorithm, taking advantage  of cheap iterations in early stages and theoretically proved convergence for fast increase of the sample size sequence in the later stages of algorithm. The switching point is an interesting problem itself, but beyond the scope of this paper. 

\begin{theorem}
Let  Assumptions  A\ref{pp_Lip}-A\ref{pp_domin} hold and $\lbrace x_k \rbrace$ be a sequence generated by Algorithm \texttt{IR-NS}. If  $\sum_{k=0}^{\infty} \alpha_k=\infty$ and $\sum_{k=0}^{\infty} t_k<\infty$ 
then there exists an accumulation point $x^*$ of $\lbrace x_k \rbrace$  which is a solution of problem \eqref{prob1}.

\label{th4.3}
\end{theorem}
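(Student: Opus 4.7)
The plan is to mimic the structure of the proof of Theorem \ref{th4.1} but replace the uniform lower bound $\overline{\alpha}$ with the summable/divergent step size bookkeeping. First I would revisit the chain of inequalities that led to \eqref{nejednakost_sa_tk}: everything goes through verbatim if one keeps $\alpha_k$ instead of $\overline{\alpha}$, so
\[
f(x_{k+1})\leq f(x_k)-\eta\,\alpha_k\|\overline{g}_k\|^2+t_k
\]
holds for every $k$ with the same $\eta=\gamma m^2$. The set inclusion $\{x_k\}\subset D$ and the lower bound on $F$ (Assumption A\ref{pp_Lip}) give us a deterministic $f_*$ with $f(x_k)\geq f_*$ for all $k$.

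Next I would telescope this inequality. Summing from $k=0$ to $K$ and using $f(x_{K+1})\geq f_*$ yields
\[
\eta\sum_{k=0}^{K}\alpha_k\|\overline{g}_k\|^2\leq f(x_0)-f_*+\sum_{k=0}^{K}t_k.
\]
By hypothesis $\sum_{k=0}^\infty t_k<\infty$, so the right-hand side is bounded uniformly in $K$, hence
$$\sum_{k=0}^{\infty}\alpha_k\|\overline{g}_k\|^2<\infty.$$
Combined with $\sum_{k=0}^\infty\alpha_k=\infty$ this forces $\liminf_{k\to\infty}\|\overline{g}_k\|^2=0$: indeed, if we had $\|\overline{g}_k\|^2\geq\varrho>0$ for all $k$ large enough, then $\sum\alpha_k\|\overline{g}_k\|^2\geq\varrho\sum\alpha_k=\infty$, a contradiction. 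Therefore there exists $K\subseteq\mathbb{N}$ with $\lim_{k\in K}\overline{g}_k=0$.

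Finally, since $\{x_k\}\subset D$ and $D$ is compact (Lemma \ref{nova} and Assumption A\ref{pp_xinD}), I can extract a further subsequence $K_1\subseteq K$ and a point $x^*\in D$ with $x_k\to x^*$ along $K_1$ and $\overline{g}_k\to 0$ along $K_1$. The summability $\sum t_k<\infty$ implies in particular $t_k\to 0$, so the subgradient inequality chain \eqref{deodokaza} from the proof of Theorem \ref{th4.1} can be reused: for every $x\in\tilde D$,
\[
f(x)\geq f(x_k)+\overline{g}_k^T(x-x_k)-2t_k,
\]
and passing to the limit along $K_1$ gives $f(x)\geq f(x^*)$ for all $x\in\tilde D$. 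Because $x^*\in D$ lies in the interior of $\tilde D$, this makes $x^*$ a local minimizer of $f$, and convexity of $f$ promotes it to a global minimizer, i.e.\ $x^*\in X^*$.

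The proof is essentially bookkeeping; the only delicate point is philosophical rather than technical, namely that the hypothesis $\sum t_k<\infty$ is a stochastic condition (since $t_k$ depends on the random samples), so the conclusion should be read in the almost-sure sense on the event where the hypothesis holds. The main obstacle is simply making sure the telescoping inequality handles the fact that $f(x_{k+1})$ is evaluated along iterates produced with different, randomly drawn $N_{k+1}$; this is already absorbed in the derivation of \eqref{nejednakost_sa_tk} via the definition of $t_k$, so no new machinery is needed beyond what the preceding lemmas provide.
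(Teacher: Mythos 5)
Your proposal is correct and follows essentially the same route as the paper's proof: the per-iteration decrease inequality with $\alpha_k$ in place of $\overline{\alpha}$, telescoping to get $\sum_k \alpha_k\|\overline{g}_k\|^2<\infty$, the contradiction argument using $\sum_k\alpha_k=\infty$ to force a subsequence with $\overline{g}_k\to 0$, and then compactness of $D$ plus the subgradient-inequality limit argument from Theorem \ref{th4.1} to conclude $x^*\in X^*$. The only addition is your closing remark on the stochastic reading of the hypothesis, which is consistent with (and not needed for) the paper's argument.
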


\begin{proof} Following the steps of the proof of Theorem  \ref{th4.1} we obtain $f(x_{k+1})\leq f(x_k)-\eta \alpha_k ||\overline{g}_k||^2+t_k$ for every $k$ and thus 
$$f(x_{k+1}) \leq f(x_0)- \eta \sum_{i=0}^k\alpha_i||\overline{g}_i||^2 +\sum _{i=0}^k t_i. $$ 
The function $ f $ is bounded from below and  $\sum_{k=0}^{\infty} t_k < \infty,$  so we conclude  
\begin{equation} \label{novi1}
\sum_{k=0}^{\infty} \alpha_k ||\overline{g}_k||^2<\infty.
\end{equation} 
 Furthermore, the assumption $\sum_{k=0}^{\infty} \alpha_k=\infty$ implies the existence of a subset $K_1$ such that 
$\lim_{k \in K_1}  \overline{g}_k =0$. Indeed, if we assume the contrary, i.e., that there exists $ \varepsilon > 0 $ such that $\|\overline{g}_k\| \geq \varepsilon >0$ for  $k$ large enough, then we obtain 
$$\sum_{k=0}^{\infty} \alpha_k ||\overline{g}_k||^2\geq \sum_{k=0}^{\infty} \alpha_k \varepsilon^2=\varepsilon^2 \sum_{k=0}^{\infty} \alpha_k=\infty,$$
which is in contradiction with \eqref{novi1}. Since the whole sequence $\lbrace x_k \rbrace_{k \in \mathbb{N}}$ is bounded due to Lemma \ref{nova}, there exist $K_2 \subseteq K_1$ and $x^* \in D$ such that $\lim_{k \in K_2}  x_k =x^*$.  
   Now, repeating the proof of Theorem \ref{th4.1} - the part after  \eqref{nej_teorema},  we conclude that $x^* \in X^*.$  
\end{proof}
The  following result is based on considerations in  \cite{SBNKNKJ} and \cite{grap} and essentially yields  worst-case complexity analysis with respect to the expected  objective function value. 
\begin{theorem}
Let Assumptions  A\ref{pp_Lip}-A\ref{pp_domin} hold, $\varepsilon>0$ and $\lbrace x_k \rbrace$ be a sequence  generated by Algorithm \texttt{IR-NS}.  Furthermore, assume that $\alpha_k\geq \overline{\alpha}>0$ for all $k \in \mathbb{N}$ and $\sum_{k=0}^{\infty} t_k\leq \overline{t}<\infty.$ Then, after at most 
$$\overline{k}=\big\lceil\frac{R^2(\overline{t}+f(x_0)-f^*)}{\eta \overline{\alpha}}\varepsilon^{-2} \big\rceil$$
iterations, we have  $E \left[f(x_{\overline{k}})-f^* \right]\leq \varepsilon, $
where $R$ is the diameter of  $D. $  
\label{th_complexity}
\end{theorem}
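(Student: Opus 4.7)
The strategy is to combine the per-iteration decrease established in the proof of Theorem~\ref{th4.1} with a convexity-based bound on the optimality gap. The starting point is the pathwise inequality $f(x_{k+1}) \leq f(x_k) - \eta \alpha_k \|\overline{g}_k\|^2 + t_k$ that appears in that proof; substituting $\alpha_k \geq \overline{\alpha}$ and telescoping from $k=0$ to $\overline{k}-1$, together with $f(x_{\overline{k}}) \geq f^*$ and the hypothesis $\sum_{k} t_k \leq \overline{t}$, I would obtain
\[
\sum_{k=0}^{\overline{k}-1} \|\overline{g}_k\|^2 \;\leq\; \frac{f(x_0)-f^*+\overline{t}}{\eta \overline{\alpha}}.
\]

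Next I would turn this subgradient bound into a function-value bound using convexity. Pick any $x^* \in X^*$; since $f^* \leq f(x_0) \leq C_2$ the minimizer lies in $D$, and Lemma~\ref{nova} places the whole iterate sequence in $D \subset \tilde{D}$ as well, so $\|x_k - x^*\| \leq R$. The subgradient inequality $f_{N_{k+1}}(x^*) \geq f_{N_{k+1}}(x_k) + \overline{g}_k^T(x^* - x_k)$, combined with Cauchy--Schwarz and the SAA control $|f_{N_{k+1}}(z) - f(z)| \leq t_k$ for all $z \in \tilde{D}$ (built into the definition \eqref{tk}), yields
\[
f(x_k) - f^* \;\leq\; R\|\overline{g}_k\| + 2 t_k.
\]

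Summing this inequality over $k = 0, \ldots, \overline{k}-1$, applying Cauchy--Schwarz to convert $\sum_k \|\overline{g}_k\|$ into $\sqrt{\overline{k} \cdot \sum_k \|\overline{g}_k\|^2}$, taking the expectation (with Jensen's inequality for the concave square root), and dividing by $\overline{k}$ then bounds the average---and hence the minimum---of $f(x_k) - f^*$ over $k < \overline{k}$:
\[
\frac{1}{\overline{k}} E\!\left[\sum_{k=0}^{\overline{k}-1} (f(x_k) - f^*)\right] \;\leq\; R\sqrt{\frac{f(x_0) - f^* + \overline{t}}{\eta \overline{\alpha}\, \overline{k}}} + \frac{2 \overline{t}}{\overline{k}}.
\]
Substituting the stated value of $\overline{k}$ makes the dominant first term equal to $\varepsilon$, while the lower-order remainder $O(\overline{t}/\overline{k})$ is absorbed; here $x_{\overline{k}}$ is understood as the best iterate so far. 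The main obstacle is the careful interchange of expectation with Cauchy--Schwarz (both $\|\overline{g}_k\|$ and $t_k$ are random variables, and they are not independent of each other), and ensuring that the $O(\overline{t}/\overline{k})$ term does not spoil the leading constant in the complexity estimate.
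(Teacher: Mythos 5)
Your telescoping step and the pathwise estimate $f(x_k)-f^*\le R\|\overline g_k\|+2t_k$ are both valid (and, since $\overline t$, $f(x_0)$, $R$, $\eta$, $\overline\alpha$ are deterministic, the right-hand side of your averaged bound is deterministic, so the interchange-of-expectation worry you raise is not actually the problem). The genuine gap is the final step: with the stated $\overline k=\lceil R^2(\overline t+f(x_0)-f^*)\varepsilon^{-2}/(\eta\overline\alpha)\rceil$ your argument yields at best $\min_{k<\overline k}E\left[f(x_k)-f^*\right]\le \varepsilon+2\overline t/\overline k$, and the remainder $2\overline t/\overline k$ cannot simply be ``absorbed'': it is strictly positive whenever $\overline t>0$, so you do not reach the claimed bound $\le\varepsilon$ with the claimed constant, only an $O(\varepsilon^{-2})$ statement with degraded accuracy or a larger $\overline k$. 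In addition, your conclusion concerns the best (or average) iterate among the first $\overline k$, whereas the theorem asserts the bound at a specific iterate reached within $\overline k$ steps.

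The ingredient you are missing is how the paper eliminates the SAA error at the output point. It does not average: from \eqref{novi1} and $\alpha_k\ge\overline\alpha$ it gets $\|\overline g_k\|\to 0$, defines $\overline k$ as the \emph{first} index with $\|\overline g_{\overline k}\|\le\varepsilon/R$, and bounds this hitting index by the stated quantity using exactly your telescoped inequality (each earlier iteration satisfies $\|\overline g_k\|>\varepsilon/R$, hence $t_k+f(x_k)-f(x_{k+1})\ge \eta\overline\alpha\varepsilon^2/R^2$). At that index, convexity of $f_{N_{\overline k+1}}$ gives $f_{N_{\overline k+1}}(x_{\overline k})-f_{N_{\overline k+1}}(x^*)\le\|\overline g_{\overline k}\|\,\|x_{\overline k}-x^*\|\le\varepsilon$, and then, instead of paying the $2t_{\overline k}$ penalty of your pointwise conversion, the paper takes conditional expectation and uses the unbiasedness property \eqref{unbiased}, i.e.\ $E\left[f_{N_{\overline k+1}}(x_{\overline k})-f_{N_{\overline k+1}}(x^*)\,|\,{\cal F}_{\overline k}\right]=f(x_{\overline k})-f(x^*)$, so no SAA error term appears at the output and the bound is exactly $\varepsilon$. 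To repair your route you would have to either enlarge $\overline k$ (changing the constant in the statement) or replace the $2t_k$ terms by this conditional-expectation device.
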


\begin{proof} First, notice that \eqref{novi1} holds and since $\alpha_k\geq \overline{\alpha}$ we obtain  $\lim_{k\to\infty}||\overline{g}_k||^2=0.$
Take arbitrary $\varepsilon>0$  and define $\varepsilon_1=\varepsilon/R.$
Since $\overline{g}_k$ tends to zero,  there exists $\overline{k}$ such that 
$||\overline{g}_{\overline{k}}||\leq \varepsilon_1.$
Let $\overline{k}$ be the first such iteration. Then for $k=0,1,\ldots, \overline{k}-1$ we have
$||\overline{g}_k||> \varepsilon_1.$ Moreover, from 
 \eqref{nejednakost_sa_tk} we get  $t_k+f(x_k)-f(x_{k+1})\geq\eta \overline{\alpha}\varepsilon_1^2$ for $k=0,1,\ldots, \overline{k}-1$  and by summing up both sides of this inequality and using $\sum_{k=0}^{\infty} t_k\leq \overline{t}<\infty$ we obtain 
$$\eta \overline{\alpha}\varepsilon_1^2 \overline{k} \leq \overline{t}+f(x_0)-f(x_{\overline{k}})\leq \overline{t}+f(x_0)-f^*,$$
i.e., $\overline{k}\leq (\overline{t}+f(x_0)-f^*)/(\varepsilon_1^2 \eta  \overline{\alpha})=\varepsilon^{-2} (R^2(\overline{t}+f(x_0)-f^*))/(\eta \overline{\alpha}).$
Since $f_{N_{k+1}}$ is convex and $\overline{g}_k\in\partial f_{N_{k+1}}(x_k)$ there holds
$f_{N_{\overline{k}+1}}(x^*)\geq f_{N_{\overline{k}+1}}(x_{\overline{k}})+\overline{g}^T_{\overline{k}}(x^*-x_{\overline{k}}) $, i.e., 
\begin{equation*}
f_{N_{\overline{k}+1}}(x_{\overline{k}})-f_{N_{\overline{k}+1}}(x^*)
\leq \overline{g}^T_{\overline{k}} (x_{\overline{k}}-x^*)
\leq ||\overline{g}_{\overline{k}}||||x^*-x_{\overline{k}}||
\leq \varepsilon_1 R=\varepsilon.
\label{zvezdazvezda}
\end{equation*}
Denote by ${\cal{F}}_{\bar{k}}$ the $\sigma$-algebra generated by $x_0,...,x_{\bar{k}}$. Since the sample is assumed to be i.i.d. and the approximate functions are computed as sample average, we obtain 

$$ E\left[f(x_{\overline{k}})-f(x^*)\right]
=E\left[ E \left[f_{N_{\overline{k}+1}}(x_{\overline{k}})-f_{N_{\overline{k}+1}}(x^*)|{\cal{F}}_{\bar{k}}\right] \right] \leq \varepsilon.$$ 
\end{proof}

Let us conclude this section by considering finite sum case  which falls into the IR-NS framework. Recall  that $h(N_k) \to 0.$ So, in the case of finite sum we have $N_k=N_{\max}$ for all $k\geq k_0$ where $k_0$ is random, but finite.  %Therefore, $\tilde{N}_{k+1}$ differs from $ N_k$ only in finitely many iterations and $\beta$ satisfying \eqref{S32} exists, i.e., the assumption A\ref{pp_beta} is not needed. 
 Moreover, $t_k$ becomes zero eventually, so the summability of $t_k$ holds. Furthermore, \eqref{nejednakost_sa_tk} reveals that $f(x_{k+1}) \leq f(x_{k})$ for all $k\geq k_0$ and thus the iterations remain in the level set $\mathcal{L}=\lbrace x | f(x)\leq f(x_{k_0})\rbrace.$ If the level set is compact then the Assumption A\ref{pp_xinD} is obviously satisfied. Finally, notice that Assumption A\ref{pp_domin} is needed only to ensure that $t_k$ tends to zero a.s. which is obviously true in the finite sum case.  Also, notice that in the strongly convex finite sum case there exists $C$ such that all $f_i$ functions are bounded from bellow by $ C. $  Therefore the following result holds.
\begin{corollary}
Let Assumptions  A\ref{pkop}-A\ref{pp_beta} hold and assume  $\sum_k \alpha_k= \infty.$  If $f=f_{N_{max}}$ and $f_i, i=1,...,N_{max}$ are continuous and strongly convex, then there exists an accumulation point $x^*$ of $\lbrace x_k \rbrace$  which is a solution of problem \eqref{prob1}.
Moreover, if   $\alpha_k\geq \overline{\alpha}>0$ for all $k \in \mathbb{N},$ then the worst-case complexity is of order ${\cal{O}}(\varepsilon^{-2})$.
\end{corollary}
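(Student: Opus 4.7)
The plan is to reduce the corollary to direct applications of Theorems \ref{th4.3} and \ref{th_complexity}. The only substantive work is to verify that, in the finite-sum strongly convex setting, Assumptions A\ref{pp_Lip}, A\ref{pp_xinD} and A\ref{pp_domin} (which those theorems require but which the corollary does not list among its hypotheses) follow automatically. The paragraph immediately preceding the corollary statement sketches the main observations; I would organize them into a clean verification and then cite.

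For Assumption A\ref{pp_Lip}, continuity and convexity of each $f_i$ are immediate (strong convexity implies convexity), and a uniform lower bound exists because each continuous strongly convex $f_i$ on $\mathbb{R}^n$ attains its minimum and there are only finitely many such functions; take $C = \min_{1 \leq i \leq N_{\max}} \min_{x} f_i(x)$. Because $h(N_k) \to 0$ (consequence of \eqref{h}) and $h$ here takes only the finitely many values $(N_{\max}-N)/N_{\max}$, there is a finite random index $k_0$ with $N_k = N_{\max}$ for all $k \geq k_0$. From $k_0$ onward $f_{N_k} \equiv f$, so $t_k = 0$; hence $\sum_{k=0}^\infty t_k = \sum_{k < k_0} t_k < \infty$ (each $t_k$ is finite as a maximum of continuous functions on a compact set, and the sum has finitely many nonzero terms). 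This both furnishes the summability hypothesis of Theorem \ref{th4.3} and makes Assumption A\ref{pp_domin} irrelevant for the present proof, since that assumption was used in the general analysis solely to invoke ULLN and obtain $t_k \to 0$. Combining \eqref{Ruslov2NK} and \eqref{S31} for $k \geq k_0$ yields $f(x_{k+1}) \leq f(x_k)$, so $\{x_k\}_{k \geq k_0}$ stays in the sublevel set $\mathcal{L} = \{x : f(x) \leq f(x_{k_0})\}$, which is compact by strong convexity of $f$ (inherited as an average of strongly convex $f_i$); since $f$ is strongly convex, all its level sets are compact, giving Assumption A\ref{pp_xinD}. The auxiliary uniform bound $F(x_0,\xi) \leq C_0$ of Lemma \ref{nova} is trivial in the finite-sum case by taking $C_0 = \max_i f_i(x_0)$.

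With A\ref{pp_Lip}--A\ref{pp_domin} verified (A\ref{pkop}--A\ref{pp_beta} being hypotheses of the corollary), the existence of a solution accumulation point follows immediately from Theorem \ref{th4.3} applied with $\sum_k \alpha_k = \infty$ and the $\sum_k t_k < \infty$ proved above. For the complexity assertion, the stronger condition $\alpha_k \geq \bar\alpha > 0$ permits a direct invocation of Theorem \ref{th_complexity}, with $\bar t := \sum_{k < k_0} t_k$, delivering the $\mathcal{O}(\varepsilon^{-2})$ bound for $E[f(x_{\bar k}) - f^*]$.

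The only delicate point I expect is the random nature of $k_0$, which makes the constant $\bar t$ (and thus the constant in front of $\varepsilon^{-2}$ in Theorem \ref{th_complexity}) a random variable that is finite almost surely. The claim therefore has to be read pathwise, or conditioned on $\{k_0 \leq K\}$; clarifying this — or equivalently remarking that once $N_k = N_{\max}$ the scheme becomes a deterministic descent method to which the existing bounds apply verbatim — is the one step that is not purely mechanical.
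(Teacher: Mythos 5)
Your proposal is correct and follows essentially the same route as the paper: the paragraph preceding the corollary is precisely this verification (finite random $k_0$ with $N_k=N_{\max}$ and $t_k=0$ thereafter, hence $\sum_k t_k<\infty$; level-set boundedness and the uniform lower bound from strong convexity; Assumption A\ref{pp_domin} needed only for $t_k\to 0$), after which Theorems \ref{th4.3} and \ref{th_complexity} are invoked. Your explicit remark about the randomness of $k_0$ (and hence of $\overline{t}$) is a fair point that the paper glosses over, but it does not change the argument.
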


\section{Numerical experiments}

In this section, we test \texttt{IR-NS} variable sample size scheme on two classes of nonsmooth convex problems: 1) Finite Sums (FS), i.e., bounded sample size with real-world data,  and 2) Expected Residual Minimization (ERM) reformulation  of  Stochastic Linear Complementarity Problems (SLCP) with   unbounded sample size and  simulated data.  The first class belongs to the machine learning framework and considers $L_2$-regularized binary  hinge loss functions (see \cite{kineski} and the references therein) for binary classification. The considered data sets are given in Table \ref{tabela_dataset} and the problem is of the form 
$$
\min_{x \in \mathbb{R}^n}f(x):=\frac{\lambda}{2}||x||^2+\frac{1}{N_{max}}\sum_{i=1}^{N_{max}} \max(0,1-z_i x^Tw_i),
$$
where $\lambda=10^{-5}$ is a regularization constant, $w_i\in \mathbb{R}^n$ are the input features, $z_i\in \lbrace \pm 1\rbrace$ the corresponding labels, $N_{max}$ is the size of relevant data set (testing or training). 

\begin{table}[h!]
\begin{center}
 \begin{tabular}{||c l c c c c c||} 
 \hline
  & Data set & $N$ & $n$ & $N_{train}$ & $N_{test}$ & $Max_{FEV}$\\ [0.5ex] 
 \hline\hline
 1 & SPLICE \cite{SPLiADL}& 3175& 60 &2540 &635&$10^6$\\ 
 \hline
 2 & MUSHROOMS \cite{MUSH}& 8124& 112 &6500 &1624&$10^6$\\ 
 \hline
 3 & ADULT9 \cite{SPLiADL}& 32561& 123 &  26049&6512&$10^7$\\
 \hline
 4 & MNIST(binary) \cite{MNIST} &70000 & 784 & 60000&10000& $10^7$\\ [0.5ex] 
 \hline
\end{tabular}
\caption{Properties of the data sets used in the experiments.}\label{tabela_dataset}
\end{center}
\end{table}

SLCP consists of finding  a vector $x \in \mathbb{R}^n$ such that
$$
x \geq 0, M(\xi)x + q(\xi)\geq 0, x^T(M(\xi)x + q(\xi)) = 0, \xi \in \Omega,
%\label{SLCP}
$$
where $\Omega$ is the underlying sample space,  $M(\xi) \in \mathbb{R}^{n,n}$ is a random matrix and $q(\xi) \in\mathbb{R}^n$ is a random vector.
ERM reformulation (see \cite{NKNKJSR} for example) is defined as follows 
$$
\min f(x) = E\left[||\tilde{F}(x,\xi)||^2\right], \quad \mbox{s. t. } \quad x \geq 0,\
%\label{prob_3}
$$
where $\tilde{F}(x, \xi) : \mathbb{R}^n \times \Omega \rightarrow \mathbb{R}^n$, 
$\tilde{F}(x, \xi) = \phi (x, M(\xi)x+q(\xi))  $ 
and $\phi : \mathbb{R}^2 \rightarrow \mathbb{R}$ is  the NCP function defined as $ \phi(a,b) = \min\{a,b\} $.  

The SAA approximate objective function \eqref{SAA} is defined as 
$$
f_{N_k}(x)=\frac{1}{N_k}\sum_{j=1}^{N_k} f_j(x)
$$
with $ f_j(x)=\|\tilde{F}(x,\xi_j)\|^2=\sum_{l=1}^n \left(\min\lbrace x_l,[M(\xi_j)x]_l+[q(\xi_j)]_l\rbrace \right)^2.$

%ERM of SLCP (see \cite{NKNKJSR} for example) is given by $$
%\min_{x \in \mathbb{R}^n} f(x) = E(||\tilde{F}(x,\xi)||^2), \quad \mbox{s. t. } \quad x \geq 0,\
%%\label{prob_3}
%$$
%where $\tilde{F}(x, \xi) : \mathbb{R}^n \times \Omega \rightarrow \mathbb{R}^n$, $\tilde{F}(x, \xi) = (\phi_1,\ldots , \phi_n)^T  $ and $\phi : \mathbb{R}^2 \rightarrow \mathbb{R}$ is  NCP function defined as $ \phi(a,b) = min\{a,b\}. $ This formulation yields SAA functions \eqref{SAA} with $$ f_i(x)=\|\tilde{F}(x,\xi_i)\|^2=\sum_{l=1}^n \left(\min\lbrace x_l,[M(\xi_i)x]_l+q_l(\xi_i)\rbrace \right)^2.$$

Since numerical results for deterministic (full sample) problem provided in \cite{kineski} reveal the advantages of BFGS-type methods in nonsmooth optimization, we chose to use the method proposed therein for finding a descent direction satisfying Assumption A\ref{pkop}. The functions in consecutive iterations differ in general, and $y_k$ needed for BFGS update is the difference of subgradients of different SAA functions, a safeguard is needed to ensure that the resulting matrices are uniformly positive definite. Thus we start with the identity matrix and skip the BFGS update if $y_k (x_{k+1}-x_k)< 10^{-4} \|y_k\|^2$. Both types of tested problem, FS and ERM allow us to calculate $\sup_{g \in \partial f_{N} (x)} p^T g$ which is crucial for finding the descent BFGS direction.  We denote the proposed algorithm by \texttt{IRBFGS} to emphasize the fact that the BFGS directions are used. 

The parameters of \texttt{IRBGFS} algorithm are  $\theta_0=0.9, \; r=0.95, \overline{\gamma}=1$ and $\gamma=10^{-4}.$   The function $ h $ is defined as  $h(N_k)=(N-N_k)/N$ for FS and $h(N_k)=1/N_{k}$ for ERM problem. Thus, we have $\tilde{N}_{k+1}=\min \{N, \lceil N-r(N-N_k) \rceil \}$ for bounded and $\tilde{N}_{k+1}=\lceil N_k / r \rceil$ for unbounded sample case. $N_0=\lceil 0.1 N \rceil $  for FS, while for ERM problems we take $N_0=1000$. 
 Step S3 is performed as already stated: we estimate the sample size lower bound $N^{trial}_{k+1}$ derived from \eqref{S3NK} and let $N_{k+1} \in \{N^{trial}_{k+1},\lceil(N^{trial}_{k+1}+\tilde{N}_{k+1})/2\rceil, \tilde{N}_{k+1}\}$. The backtracking technique for finding $\alpha_k=0.5^j$ is used, but at each backtracking step we try all three candidate values for $N_{k+1}$. We use cumulative samples, although other approaches are feasible as well. The value $N^{trial}_{k+1}$ is calculated as follows: for FS
$$ N_{k+1}^{trial}:=N_k+\frac{1-r}{2}\cdot\frac{\tilde{N}_{k+1}-N_k}{1-\theta_{k+1}} 
- \hat{\theta}_{k+1}\left(\gamma\alpha||p_{k-1}||^2-f_{\tilde{N}_{k+1}}(x_k)+f_{N_k}(x_{k})\right),$$
where $\hat{\theta}_{k+1}=N\cdot\frac{\theta_{k+1}}{1-\theta_{k+1}}$; for ERM 
 $$ N_{k+1}^{trial}:=\frac{1-\theta_{k+1}}
 {\frac{1-r}{2}\cdot\frac{N_k-\tilde{N}_{k+1}}{\tilde{N}_{k+1}N_k}
 +\frac{1-\theta_{k+1}}{N_{k}}+
 \theta_{k+1}\left(\gamma\alpha||p_{k-1}||^2- f_{\tilde{N}_{k+1}}(x_k)+f_{N_k}(x_{k})  \right)}.$$

The motivation for these choices comes from condition \eqref{S3NK} from Step S3. The merit function at new point should be decreased for at least $\frac{1-r}{2}(h(\tilde{N}_{k+1})-h(N_k))$. Therefore, approximating $||p_k||$ with $||p_{k-1}||$ and using \eqref{S31} and \eqref{S32} from Step S3, we obtain the lower bound $N_{k+1}^{trial}$ for $N_{k+1}$. If this value falls below $N_0$, we simply take $N_{k+1}^{trial}=N_0$. 

Our numerical study has two goals: 
\begin{itemize}
    \item [1)] to investigate if the variable sample size approach is beneficial in terms of overall optimization costs; 
    \item[2)] to investigate if the potential decrease of the sample size coming from S3 is beneficial. 
\end{itemize}
This is why we compare the proposed \texttt{IRBFGS} method to: 
1) \texttt{FBFGS} which takes the full sample (when applicable) at each iteration, i.e., in FS problems $N_k=N_{max}$ for each $k$; 2) \texttt{HBFGS} which takes $N_{k+1}=\tilde{N}_{k+1}$ for each $k$. The criterion for comparison is the number of scalar products denoted by FEV. We report the average values of 10 independent runs. The algorithms  are stopped when the maximum number of scalar products, $Max_{FEV}$ is reached. 
In the FS case, we track the value of the (full sample) objective function, while in the ERM case we track the Euclidean difference between $x_k $ and the solution $x^*$ since the objective function is not computable while the solution is known in advance. 

\begin{figure}[htbp]
    \hspace*{-0.3in}
   \includegraphics[width=0.45\textwidth,angle = 270]{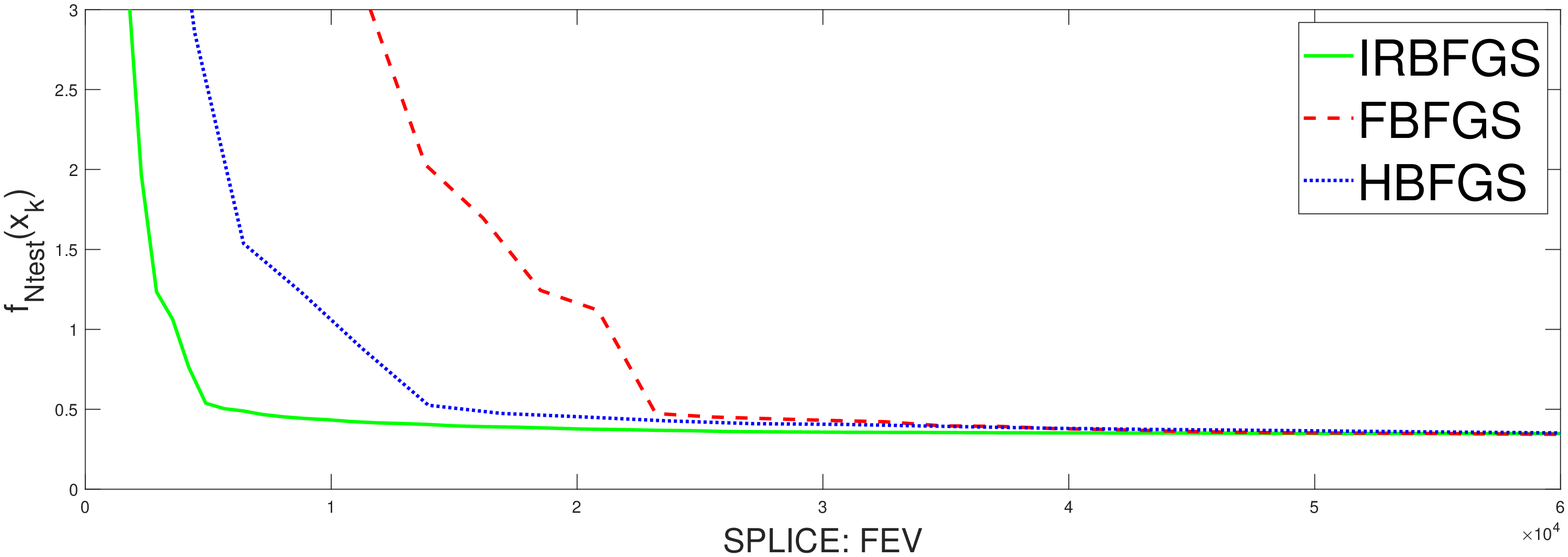}
   \hspace*{-0.3in} 
   \includegraphics[width=0.45\textwidth,angle = 270]{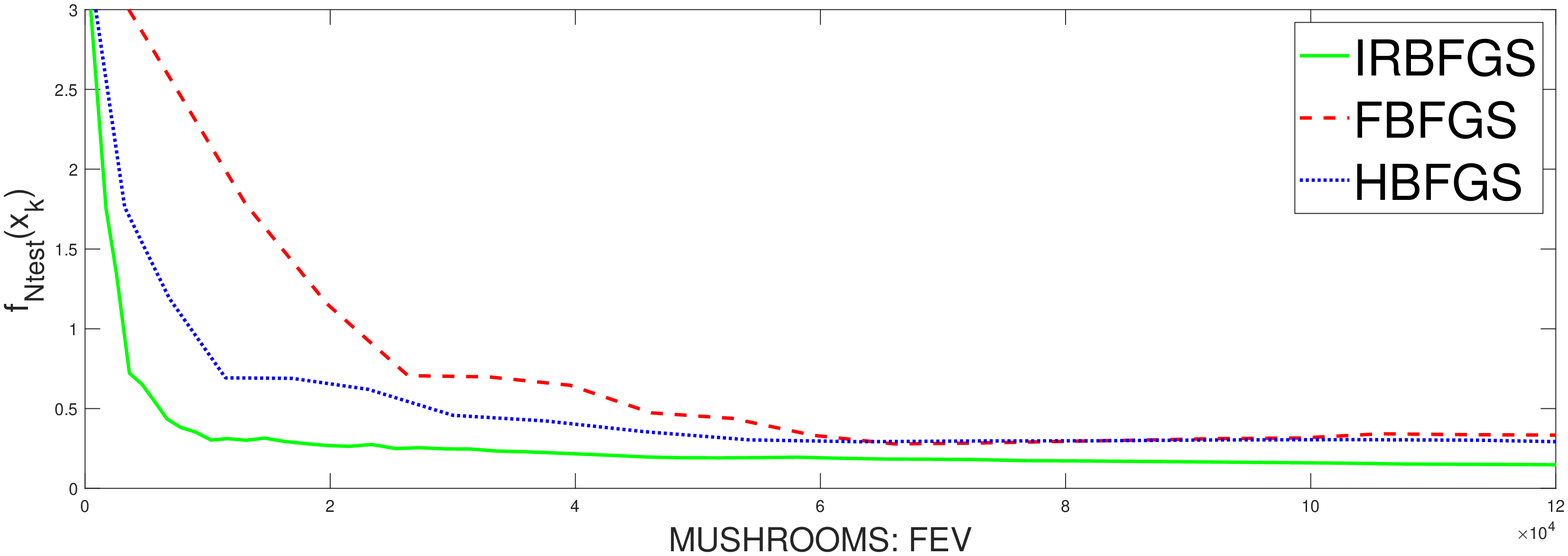}\\
   \hspace*{-0.3in} 
         \includegraphics[width=0.45\textwidth,angle = 270]{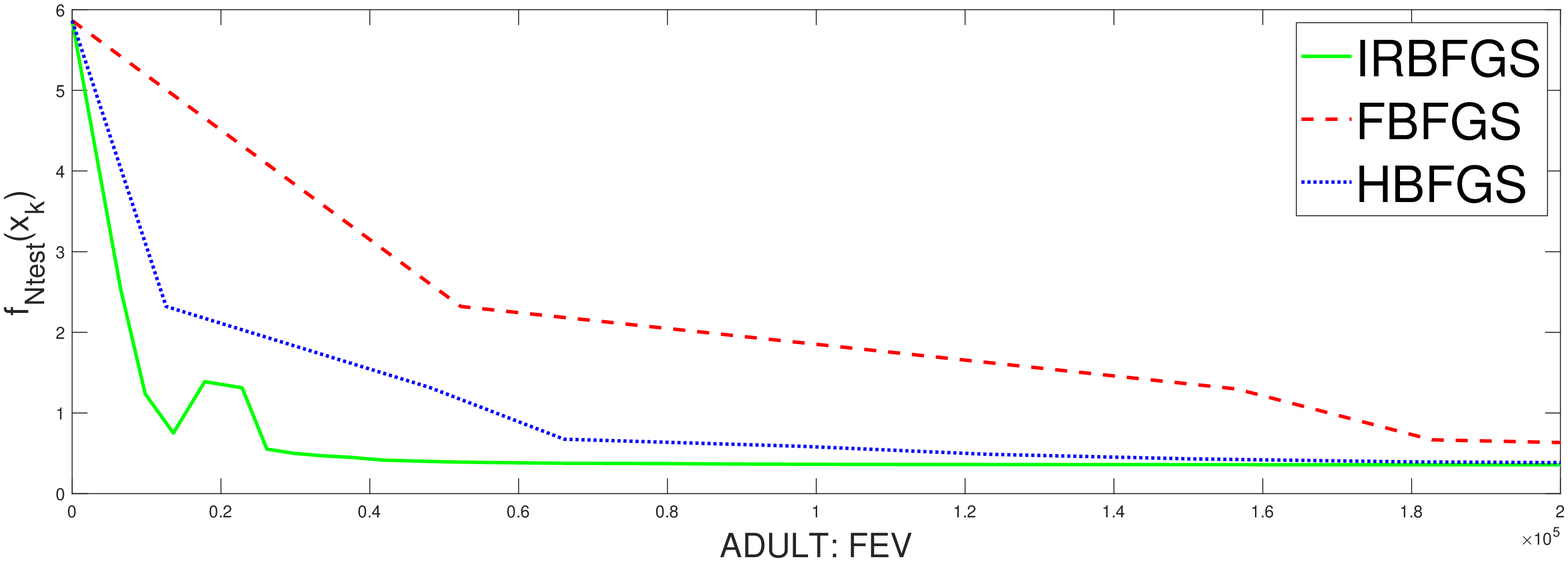}\hspace*{-0.27in} 
         \includegraphics[width=0.45\textwidth,angle = 270]{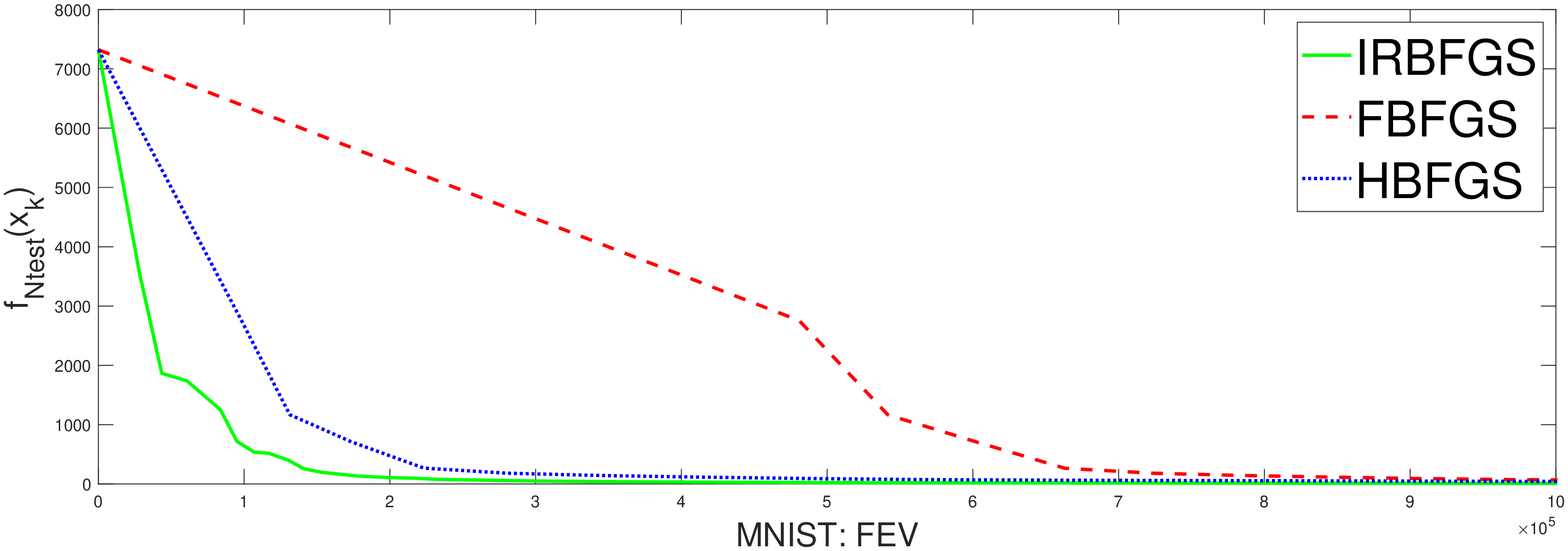}\\
          
    \caption{FS Problem. Testing loss versus function evaluations.}\label{all1}
\end{figure}

\begin{figure}[htbp]
\begin{center}
\includegraphics[width=0.45\textwidth,angle = 270]{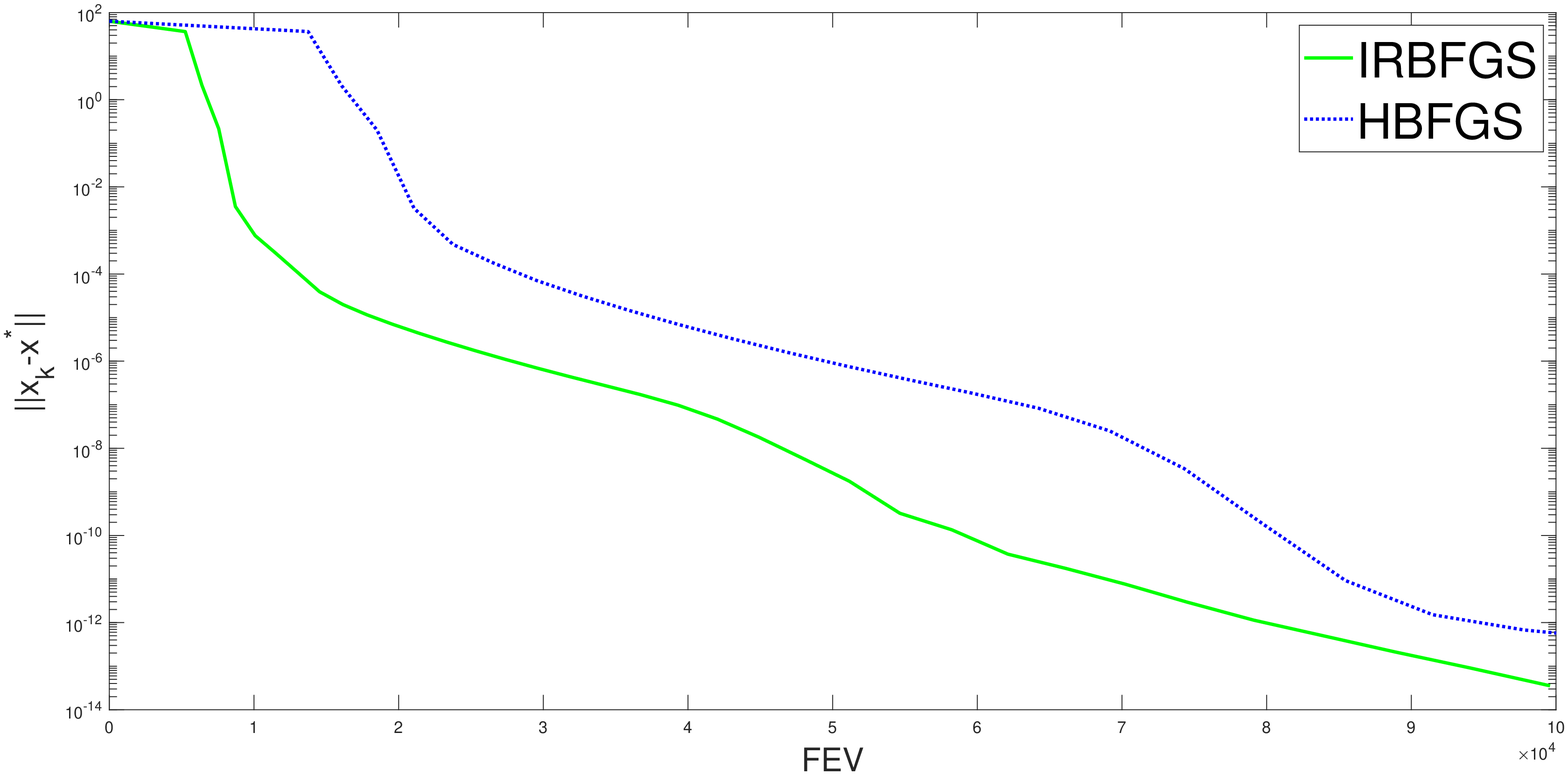}
\caption{ERM Problem. The error $\|x_k-x^*\|$  versus   function evaluations}
\label{all2}
\end{center}
\end{figure}

\begin{figure}[htbp]
    \hspace*{-0.3in}
 \includegraphics[width=0.45\textwidth,angle = 270]{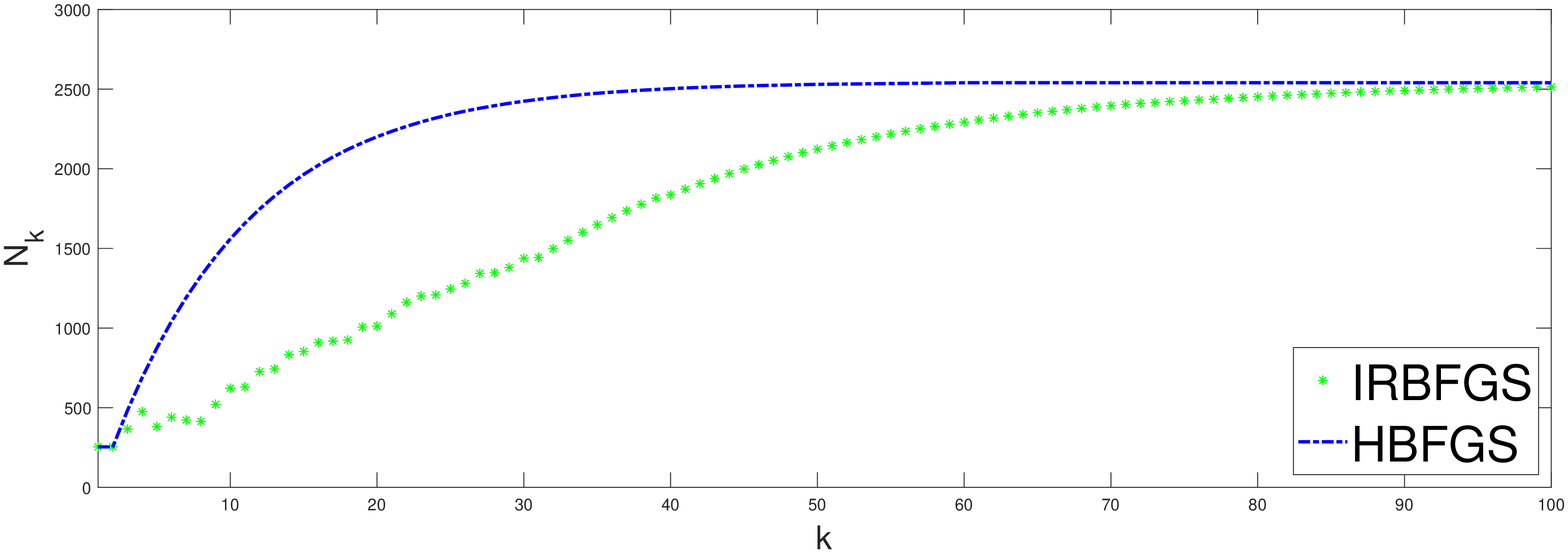} \hspace*{-0.3in}
   \includegraphics[width=0.45\textwidth,angle = 270]{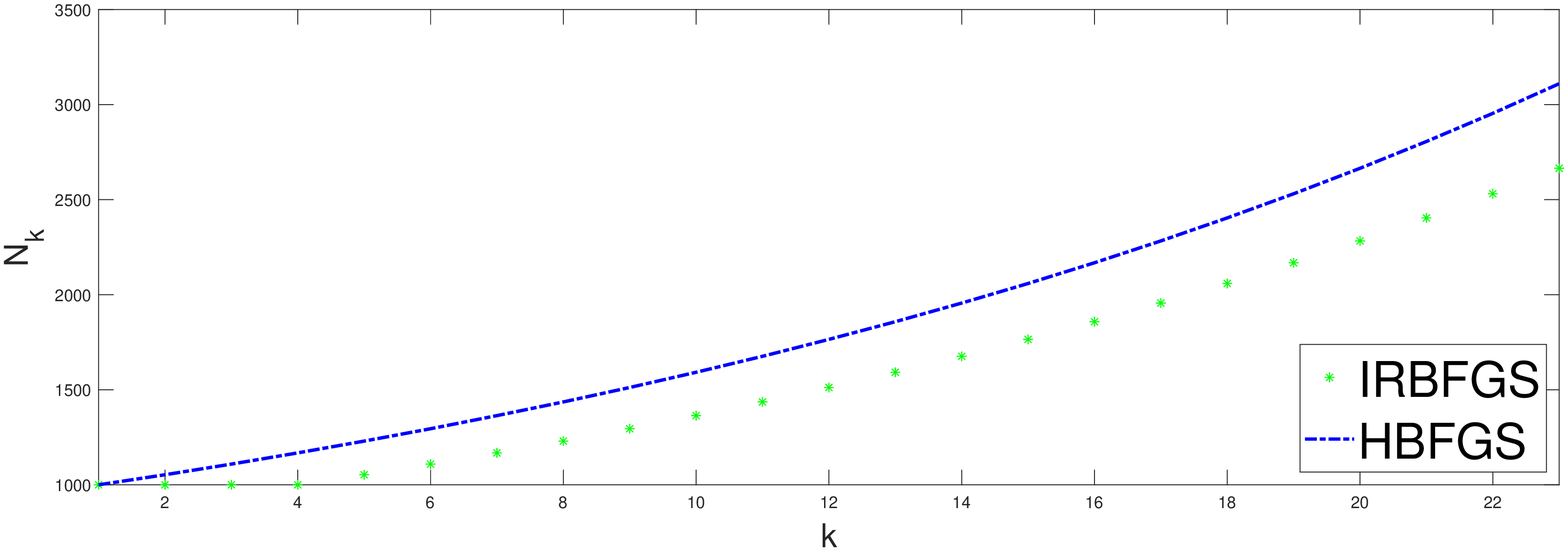}
   \\
   \caption{\texttt{IRBFGS} sample size versus \texttt{HBFGS} sample size sequence: FS Problem - SPLICE data set (left) and ERM Problem (right).}
   \label{SPLICE_uzorak}
\end{figure}

Fig. \ref{all1} shows the results on FS problems with  uniform random $x_0$.  Since training and testing errors behave similarly, we report only the testing error. The $y$-axes are in logarithmic scale. The plots demonstrate the computational savings obtained by \texttt{IRBFGS} in almost all cases. In fact, both subsampled method, \texttt{IRBFGS} and \texttt{HBFGS} use smaller FEV to obtain the solutions of the same quality as the full BFGS - \texttt{FBFGS.} 
Comparing \texttt{IRBFGS} and \texttt{HBFGS}, one can see that \texttt{IRBFGS} is more efficient and occasional decrease of $ N_k $  in Step S3 is beneficial in terms of computational effort measured by FEV. Typical behavior of the sample size sequence is plotted in Fig. \ref{SPLICE_uzorak} (left). 

ERM problems are formed as in \cite{ NKNKJSR, fukushima, li} where the first order methods were tested. Here we proceed with the nonsmooth BFGS direction. We report  the results for problem with $n=100$ and volatility measure $\sigma=10$. $Max_{FEV}$ is set to $10^5$ and the average ending sample size is 4714 for \texttt{IRBFGS} and 3110 for \texttt{HBFGS}. The results and typical behavior of the sample size sequence are presented in 
Fig. \ref{all2} and \ref{SPLICE_uzorak} (right), respectively. As we can see,   \texttt{IRBFGS} algorithm  significantly outperforms the heuristic scheme \texttt{HBFGS.} 

\section{Conclusions} 

We proposed a framework for minimization of nonsmooth convex function in the form of mathematical expectation. The general algorithm is defined within  Inexact Restoration approach, using a suitable approximate function computed as the sample average approximation in each iteration. The sample size is determined adaptively, taking into account the progress toward the stationary point and thus balancing the computational cost and accuracy in endogenous way without heuristic elements. The Armijo line search rule, adapted to the nonsmooth function, is used for step sizes. Algorithm is defined with a general descent direction for nonsmooth function, assuming that a suitable oracle for direction computation is available. It is proved, using the standard IR methodology, that the sample size tends to infinity or attains the fixed maximal value. Therefore, the method generates the approximate solution of desired accuracy but with lower computational costs. The theoretical analysis reveals a.s. convergence towards stationary points under the set of standard assumptions.  The numerical experiments are based on the BFGS direction  adapted to the nonsmooth environment \cite{kineski}. The oracle for computing the direction is taken from literature for the hinge loss problems and Expected Residual Minimization of Stochastic Linear Complementarity Problem. The obtained numerical results are in line with the theoretical considerations and confirm the efficiency of the algorithm.

\medskip
\noindent{\bf Acknowledgement.} We are grateful  to the  editor and the anonymous referees for their comments that helped us to improve the paper. \\
{\bf Funding} The work of Kreji\'c and Krklec Jerinki\'c is supported by Provincial Secretariat for Higher Education and Scientific Research of Vojvodina, grant no. 142-451-2593/2021-01/2. The work of Ostoji\'c is supported by the Ministry of Education, Science and Technological Development, Republic of Serbia. 
\\
{\bf Availability statement} The datasets analysed during the current study are available in the MNIST database
of handwritten digits \cite{MNIST},  LIBSVM Data: Classification (Binary Class) \cite{SPLiADL} and UCI Machine Learning Repository \cite{MUSH}.
\section*{Declarations} 
{\bf Conflict of interest} The authors declare no competing interests.

\end{document}